\numberwithin{equation}{section}
\newcommand{\C}{\mathbb{C}}
\newcommand{\R}{\mathbb{R}}
\newcommand{\N}{\mathbb{N}}
\newcommand{\Z}{\mathbb{Z}}
\newtheorem{theorem}{Theorem}[section]
\newtheorem{lemma}[theorem]{Lemma}
\newtheorem{cor}[theorem]{Corollary}
\newtheorem{defi}[theorem]{Definition}
\begin{document}
\title{Remling's Theorem on canonical systems }
 \maketitle
 \begin{center}\author{KESHAV RAJ ACHARYA }\end{center}

\begin{abstract} We prove the Remling's Theorem on canonical systems and discuss the connection between Jacobi and Schrödinger equation and canonical systems.

\smallskip
 \noindent \textbf{ Keywords:} Canonical systems, Weyl-m functions, absolutely continuous spectrum, reflectionless Hamiltonians.  \end{abstract}

\section{Introduction}This paper deals with the canonical system of the following form \begin{equation}\label{ca}Ju'(x)=z H(x)u(x).\end{equation} Here $ J =
\left(\begin{array}{cc}0&-1\\1&0\end{array}\right)$ and $H(x)$ is a $2\times2$ positive semidefinite matrix whose entries  are locally integrable and that there is no non-empty open interval  $I$ so that $H = 0$  $a.e.$ on $I$.
The complex number $z\in\C$ involved in \ref{ca} is a spectral parameter. For fixed $z\in \C ,$ a function
$u(.,z):[0,N]\rightarrow \C^2$ is called a solution if $u$ is
absolutely continuous and satisfies \ref{ca}. Consider the Hilbert space
\[\displaystyle L^2(H,\R_+) = \Big \{f(x)
=\left(\begin{array}{c}f_1(x)\\f_2(x)\end{array}\right) :
 \|f\| < \infty \Big \}\]
 with an inner product $\displaystyle\big<f, g\big> = \int_{0}^{\infty}f(x)^* H(x)g(x)dx.$
 Such canonical systems \ref{ca} on $L^2(H,\R_+)$ have been studied by Hassi, De snoo,  Winkler, and  Remling  in the papers [  \cite{HW2},\cite{ RC}, \cite{HW},  \cite{cr} ] in various context. The Jacobi and Schrödinger equations can be written into canonical systems with appropriate choice of $H(x)$. In addition, the canonical systems  are closely connected with the theory of de Branges spaces and the inverse spectral theory of one dimensional Schrödinger equations,  see \cite{RC}.\\

Recently, in the spectral theory of Jacobi and Schrödinger operators, the Remling's theorem, published in the \emph{Annals of Math} in 2011 (see \cite{RC1}), has been one of the most popular results. It has revealed some new fundamental properties of absolutely continuous spectrum of Jacobi and Schrödinger operators that changed the perspective of many mathematicians about the  absolutely continuous spectrum. In this paper we will prove
the Remling's Theorem  on canonical systems. \\

 This paper has been organized as follows: In section \ref{Weylth}, we discuss the Weyl theory of canonical systems following the analogous treatment of Weyl theory of Jacobi and Schrödinger equations. In section \ref{topo} we discuss the basic definitions and space of Hamiltonians in order to state the main theorem. In section \ref{mainth} we prove our main theorem using the similar techniques from \cite{RC1}, more specifically we  prove the Breimesser-Pearson theorem on canonical systems which is in fact the foundation for the proof of the Remling's theorem. Finally we show the connection between Jacobi and Schrödinger equations with canonical systems in section  \ref{jsc}.

 \section{Weyl Theory} \label{Weylth} Let $ u_{\alpha},v_{\alpha}$ be
the solution of \ref{ca} with the initial values \begin{align*}
u_{\alpha}(0,z)=\left(\begin{array}{c}\cos{\alpha}\\-\sin{\alpha}\end{array}\right)  \,\,\,\,
\txt{   and   }\,\,\,\,
    v_{\alpha}(0,z)=\left(\begin{array}{c}\sin{\alpha}\\\cos{\alpha}\end{array}\right), \,\,\,\,\alpha \in (0,\pi].\end{align*}
    For $z\in\C^+$, want to define $m_{\alpha}(z)\in\C $ as the
    unique coefficient for which\[ f_{\alpha}=
    u_{\alpha}+m_{\alpha}(z)v_{\alpha}\in L^2(H,\R_+).\] Consider a compact interval $[0,N]$ and let
    $z\in\C^+$, define the unique coefficient $m_{N}^{\beta}(z)$ as follows,  $ f(x,z)=u(x,z)+ m_{N}^{\beta}(z)v(x,z)$ satisfying
    \[f_1(N,z)\sin\beta+f_2(N,z)\cos\beta=0.\] Clearly this is well defined
   because $u(x,z)$ does not satisfies the boundary condition at
    $N.$ Otherwise $z \in C^+$ will be an eigen value for some self-adjoint relation of the system \ref{ca} as explained in \cite{KA}. From the boundary condition $f_1(N,z)\sin\beta+f_2(N,z)\cos\beta=0$ at $N$ we
    get\[m_N^{\beta}(z)= -\frac{u_1(N,z)\sin \beta+u_2(N,z)\cos \beta}{v_1(N,z)\sin \beta +v_2(N,z)\cos
    \beta}.\]As  $ z, N, \beta $  varies $m_N^{\beta}(z)$ becomes a funtion of these arguments, and since $u_1, u_2, v_1, v_2 $ are entire function of $z$ it follows that $m_N^{\beta}(z)$ is meromorphic function of $z.$ \\ Let  $C_N(z)=\{m_N^{\beta}(z): 0\leq\beta <
    \pi\}$\\
 Here
\begin{align*}m_N ^{\beta}(z)=- \frac{u_1t+u_2}{v_1t+v_2},
   \,\,\, t=\tan{\beta},\,\,\, t\in\R\cup\{\infty\}.\end{align*}This is a
    fractional linear transformation. As a function of  $ t\in \R$ it maps real line  to a circle.
    So for fixed $z\in C^+,$   $C_N(z)$ is a circle. Hence for any complex number  $m\in\C$ \[ m\in C_N(z) \Leftrightarrow \text{ Im }\frac{u_2+mv_2}{u_1+mv_1}=
    0\]
From this identity the equation of the circle $C_N(z)$ is given by
$ |m-c|^2= r^2$ where \begin{align} \label{cr} c =
\frac{W_N(u,\bar{v})}{W_N(\bar{v},v)},   \hspace{1.5cm}
r=\frac{1}{|W_N(\bar{v},v)|}\end{align}

Now suppose $ f(x,z)=u(x,z)+ m_{N}^{\beta}(z)v(x,z)$, then  $ m = m_{N}^{\beta}$ is an interior of $ C_N$ if and only if  \begin{align} \label{wc} |m-c|^2 < r^2
\Leftrightarrow \frac{W_N(\bar{f},f)}{W_N(\bar{v},v)}<0
\end{align}

 Let us write $\tau y = zy $ if and only if $ Jy' = zH(x)y$. Suppose $f$ and $g$ are the
solutions of \ref{ca} then we have the following identity, called the {\em Green's
Identity}. \begin{align} \int_0^{N}(f^*H(x)\tau g -
(\tau f)^*H(x)g(x))dx = W_0(\bar{f},g)-W_N(\bar{f},g)\end{align}

Using the Green's identity we have,

\begin{align}\label{gi} W_N(\bar{f},f)= 2i \text{ Im } m(z)-2i\text{ Im}z\int_0^Nf^*(x)
H(x)f(x)dx.\end{align}

\begin{align*} W_N(\bar{v},v)= - 2i\text{ Im}z\int_0^Nv^*(x)
H(x)v(x)dx.\end{align*}

\begin{align*}\frac{W_N(\bar{f},f)}{W_N(\bar{v},v)}  = \frac{- \text{ Im } m(z)+ \text{ Im}z\int_0^Nf^*(x)
H(x)f(x)dx }{\text{ Im}z \int_0^Nv^*(x)
H(x)v(x)dx}.\end{align*} Hence from \ref{wc} we see that $\frac{W_N(\bar{f},f)}{W_N(\bar{v},v)}<0 $  if and only if
 \begin{align*} \int_0^Nf^*(x) H(x)f(x)dx < \frac{\text {
Im} m(z)}{\text{ Im} z}.\end{align*}  Thus it follows that $ m$ is an interior of $C_N $ if and only if
\begin{align} \label{inc}
\int_0^Nf^*(x) H(x)f(x)dx < \frac{\text {
Im} m(z)}{\text{ Im} z} \end{align} and $ m\in C_N(z) $ if and only if

\begin{align} \label{mh}  \int_0^Nf^*(x)
H(x)f(x)dx =\frac{\text{ Im } m(z)}{\text{ Im } z}.\end{align}

For $z\in \C^+$, the radius of the circle $C_N(z)$ is given by
\begin{align} \label{radi} r_N(z) =
\frac{1}{|W_N(\bar{v},v)|}= \frac{1}{2 \text { Im } z\int_0^Nv^*(x)
H(x)v(x)dx} .\end{align}

Now let $0<N_1<N_2 <\infty$. Then  if $ m $ is inside or on $C_{N_2}$ \[ \displaystyle \int _0^{N_1} f^*(x,z) H(x) f(x,z)dx < \int _0^{N_2} f(x,z)^* H(x) f(x,z)dx \leq \frac{ \text{ Im }m}{ \text{ Im }z}\] and therefore $m$ is inside $C_{N_1}$. Let us denote the interior of $ C_N(z) $ by $\text{ Int} C_N(z) $  and suppose $D_N(z) = C_N(z)\cup \text{ Int} C_N(z).$  Then \begin{align*} m \in
D_N(z) \Leftrightarrow \int_0^Nf^*(x) H(x)f(x)dx \leq \frac{ \text{
Im }m(z)}{\text{ Im }z}.\end{align*}
These are called the Wyle Disks.
These Wyle Disks are nested. That is $D_{N+\epsilon}(z)\subset
D_N(z)$ for any $\epsilon>0$.
From \ref{radi} we see that $r_N(z)>0$, and $r_N(z)$ decreases  as
$  N\rightarrow \infty.$ So $ \displaystyle \lim_{N\rightarrow \infty} r_N(z)$
exists and \[  \lim_{N\rightarrow \infty} r_N(z)=0\Leftrightarrow
v\notin L^2(H, \R_+).\] Thus for a given $z\in\C^+$ as $N\rightarrow \infty$ the circles $C_N(z)$ converges either to a circle $C_{\infty}(z)$ or to a point $m_{\infty}(z).$ If $C_N(z)$ converges to a circle, then its radius $r_{\infty}= \lim r_{N}$ is positive and  \ref{radi}  implies  that $ v\in L^2(H,\R_+)$. If $ \tilde{m}_{\infty}$ is any point on $ C_{\infty}(z)$ then $ \tilde{m}_{\infty}$ is inside any $C_N(z)$ for $ N>0.$ Hence \[ \displaystyle \int_0^N (u+\tilde{m}_{\infty}v)^*H(u+\tilde{m}_{\infty}v) <  \frac{ \text{ Im }\tilde{ m}_{\infty}}{ \text{ Im }z}\] and letting $ N \rightarrow \infty$ one sees that $ f(x,z)= u+ \tilde{m}_{\infty}v \in L^2(H,\R_+).$ The same argument holds if $\tilde{m}_{\infty}$ reduces to a point $m_{\infty} $. Therefore, if $ \text {Im }z\neq 0$, there always exists a solution of \ref{ca} of class $\in L^2(H,\R_+)$.In the case $C_N(z)\rightarrow C_{\infty}(z) $ all solutions are in $L^2(H,  \R_+)$ for Im $z\neq 0$ and this identifies the limit-circle case with the existence of the circle  $C_{\infty}(z)$ . Correspondingly, the limit-point case is identified with the existence of the point $ m_{\infty}(z).$ In this case $C_N(z)\rightarrow m_{\infty} $ there results $ \lim r_N =0$ and  \ref{radi}  implies that $v$ is not of class $L^2(H,  \R_+)$.
Therefore in this situation there is only one linearly independent solution of class $L^2(H , \R_+)$. In the limit circle case $m\in C_N$ if and only \ref{mh} holds. Since $f(x,z) = u(x,z) + mv(x,z)$, it follows that $ m$ is on $C_{\infty}$ if and only if \begin{align} \label{cl5} \int_0^{\infty}f(x,z)^*Hf(x,z)dx = \frac{\text {Im} m(z)}{\text{ Im} z}.\end{align} From \ref{gi}, it follows that $m$ is on the limit circle if and only if $\displaystyle \lim_{N\rightarrow \infty} W_N(\bar{f},f)=0 $. From the above discussion we proved

 \begin{theorem}\cite{coddington}  \begin{enumerate}
  \item  The  limit-point case $(r_{\infty}=0 )$
implies that \ref{ca} has precisely one $ L^2 (H, R_+)$
solution.
\item The  limit-circle case $ (r_{\infty}>0 )$ implies all solutions of
 \ref{ca} are in $ L^2 (H, \R_+).$
\end{enumerate}   \end{theorem}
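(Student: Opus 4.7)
Almost all of the work is already done in the nested-disk analysis preceding the statement; the theorem is essentially a clean repackaging. I would split into the two cases and invoke the established facts, keeping the equivalence $v\in L^2(H,\R_+)\Leftrightarrow r_\infty>0$ (from \eqref{radi}) at the center of the argument.

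For part (2), the limit-circle case, the hypothesis $r_\infty>0$ together with \eqref{radi} immediately gives $v\in L^2(H,\R_+)$. Pick any $\tilde m_\infty\in C_\infty(z)$; the limiting argument just above the theorem, where letting $N\to\infty$ in the bound \eqref{inc} yields $f=u+\tilde m_\infty v\in L^2(H,\R_+)$. Subtracting $\tilde m_\infty v$ then gives $u\in L^2(H,\R_+)$ as well. Since $\{u_\alpha,v_\alpha\}$ has Wronskian $\cos^2\alpha+\sin^2\alpha=1$ at $x=0$ and hence forms a basis of the two-dimensional solution space of \eqref{ca}, every solution is an $L^2(H,\R_+)$-combination of two $L^2$ functions and therefore lies in $L^2(H,\R_+)$.

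For part (1), the limit-point case, the hypothesis $r_\infty=0$ combined with \eqref{radi} gives $v\notin L^2(H,\R_+)$. The circles $C_N(z)$ collapse to the single point $m_\infty(z)$, and the same limiting argument shows $f=u+m_\infty v\in L^2(H,\R_+)$, producing at least one nontrivial $L^2$ solution. If there existed a second $L^2$ solution linearly independent of $f$, then $\{u_\alpha,v_\alpha\}$, being a basis of the two-dimensional solution space, would be expressible as linear combinations of these two $L^2$ solutions; in particular $v=v_\alpha\in L^2(H,\R_+)$, contradicting $r_\infty=0$. Hence the $L^2(H,\R_+)$-solutions form a one-dimensional subspace, giving precisely one such solution up to scalars. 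I foresee no real obstacle beyond checking that $\{u_\alpha,v_\alpha\}$ is genuinely a basis, which is the Wronskian computation noted above; everything else is assembling the implications already derived.
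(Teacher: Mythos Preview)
Your proposal is correct and follows essentially the same route as the paper: the theorem is stated immediately after the nested-disk discussion with the words ``From the above discussion we proved,'' and that discussion establishes exactly the ingredients you list---$r_\infty=0\Leftrightarrow v\notin L^2(H,\R_+)$ from \eqref{radi}, and $f=u+m_\infty v\in L^2(H,\R_+)$ from passing to the limit in \eqref{inc}. You make explicit two small points the paper leaves to the reader (subtracting to get $u\in L^2$ in the limit-circle case, and the basis/contradiction argument in the limit-point case), but the underlying argument is identical.
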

The identity \ref{mh} shows that $ m_{N}^{\beta}(z)$ are holomorphic functions mapping upper-half plane to itself. The poles and zeros of these functions lie on the real line and are simple. In the limit-point case, the  limit  $m_{\infty}(z)$ is a holomorphic function mapping upper-half plane to itself. In limit-circle case, each circle $C_N(z)$ is traced by points $ m = m_N^{\beta}(z)$ as $ \beta $ ranges over $0 \leq \beta < \pi $ for fixed $ N $ and $z$. Let $z_0$ be fixed, $ \text{ Im }z_0 > 0$. A point $ \tilde{m}_{\infty}(z_0)$ on the circle $C_{\infty}(z_0)$ is the limit point of a sequence
$  m_{N_j}^{\beta_j}(z)$ with $ N_j\rightarrow \infty $ as  $ j\rightarrow \infty $.

 In \cite{KA} we showed that $ \operatorname{tr} H \equiv 1$ implies the limit-point case. We would like to consider \ref{ca} to have limit-point case. If the Hamiltonian $ H$ in a canonical system does not have trace norm $1$, then we can always pass the equivalent canonical system with the Hamiltonian $H$ having trace norm $1$. More precisely we  use the change of variable \begin{align} \label{cv}t(x)= \int _0^x \operatorname{tr} H(s) ds .\end{align}
Let $x(t)$ be the inverse function  and define the new Hamiltonian
$ \widetilde{H}(t)= \frac{1}{\operatorname{tr} H (x)} H(x(t))$ so that $ \operatorname{tr} \widetilde {H}(t) \equiv 1$. Let $u(x,z)$ be the solution of the original system \[ Ju'= zHu\] and put $ \widetilde{u}(t,z) = u(x(t),z).$ Then  $ \widetilde{u}(t,z)$ solves the new equation\[J\widetilde{u}'= z \widetilde{H}\widetilde{u} .\] Their corresponding Weyl-m functions on a compact interval $[0,N]$ are the same up to the change of the point of boundary condition, ie $\tilde{m}_N^\beta(z) = m_{x(N)}^\beta(z)$. From now onward we will consider a canonical system with $\operatorname{tr} H \equiv 1.$ This will reduce our system to have limit-point case.

\section{Topologies on the space of Hamiltonian.} \label{topo}

We need to consider the space of Hamiltonians  and a suitable topology on it so that the space is compact. With the topology we have, we want  to work with the basic object like $m$ functions of the canonical system. Let $M(\R)$ denotes the set of Borel measures on $\R.$ Consider the space \begin{align*}
\mathcal V_{2\times 2} = \big\{\mu \in M(\R)^{2\times2}: d \mu =
H(x)dx, H(x)\geq 0, \operatorname{tr}H(x)\equiv 1, H(x)\in
L^1_{\text{loc}} \big \}.\end{align*}
 We would like to define a metric on  $\mathcal V_{2\times 2} $. We will follow the same procedure as in \ref{RT}. Let $ C_{c}(\R)$ denotes the space of all continuous functions on $\R$ with compact support, the continuous functions vanishing outside of a bounded interval. This space $ C_{c}(\R)$ is complete with respect to the $\|.\|_{\infty}$ norm. Pick a
countable dense subset $\{f_n: \,\, n\in\N \} \subset C_{c}(\R),$
 the continuous functions of compact support. Let
 \begin{align*} \rho_n(\mu, \nu)= \sum_{1\leq i, j\leq 2} \Big|\int f_n
 d(\mu_{ij}-\nu_{ij})(x)\Big|.\end{align*} Then define a metric $d$  on  $\mathcal V_{2\times2}.$ as \begin{align*} d(\mu, \nu)=
 \sum_{n=1}^{\infty}2^{-n}\frac{ \rho_n(\mu, \nu)}{ 1+ \rho_n(\mu,
 \nu)}.\end{align*} Clearly $d$ is a metric on $\mathcal V_{2\times2}.$ Moreover, $\Big(\mathcal V_{2\times2},d\Big)$ is a compact metric space.

 We can now consider canonical system with measures as Hamiltonian.
 \begin{align}\label{ca5} Ju'=z\mu u ,\,\, \mu  \in \mathcal V_{2\times2}.\end{align} If $I\subset\R$
 is an compact interval and $ B(I)$ denotes the space of all  complex valued bounded
 Borel measurable functions on $I$.Then $B(I)$ is complete with respect to the metric given by $\rho(f,g)=||f-g||_u $ where the norm on $B(I)$ is $||f||_u= \text{sup}_{x\in I}|f(x)|$.
   Let $B(I)^2= \Big\{f= \begin{pmatrix}f_1\\f_2 \end{pmatrix} \,\,\, f_1,f_2 \in
 B(I)\Big\}.$ Clearly the space $B(I)^2$ is complete with respect
 to the metric given by $\rho(f,g)=||f-g||_u $ where $||f||_u =\text{sup}_{x\in I}|f_1(x)|+ \text{sup}_{x\in I}|f_2(x)|$.
   Let $f\in B(I)^2,$ we call $f$ a solution to
 the equation \ref{ca5} if and only if \[ J(u(x)-u(a+)) = z \int_{(a,x)}d \mu(t)u(t)\,\, \text{ if } x\geq a \geq 0
   \] and \[J(u(x)-u(a-)) = - z\int_{(x,a)}d \mu(t)u(t)\,\, \text{ if } x\leq a \leq 0.\]
   In order to show the existence of a solution  of the equation
   \ref{ca5}, define a map  on $B(I)^2$ by \begin{align*} Tu(x) = u(0) - z J\int_0^x
\mu(t)u(t)\end{align*} and show that $T$ is a contraction mapping.
   \begin{align*}\|Tu-Tv \|_u &= \text{sup}_{x\in I}\Big|z \int_0^x
(u_1-v_1)d\mu_{11} +(u_2-v_2)d\mu_{12} \Big| \\&+ \text{sup}_{x\in
I}\Big|z \int_0^x (u_1-v_1)d\mu_{21} +(u_2-v_2)d\mu_{22} \Big|
\\ & \leq \text{sup}_{x\in I}\Big[|z|\int_0^x |u_1-v_1|d\mu_{11}
+|z||u_2-v_2|d\mu_{12}\\&+\text{sup}_{x\in I}|z| \int_0^x
|u_1-v_1|d\mu_{21} +|z||u_2-v_2|d\mu_{22}\Big]\\ & \leq
\frac{c}{2} \text{sup}_{x\in I}\Big[\text{sup}_{t\in
[0,x]}|u_1-v_1|+\text{sup}_{t\in [0,x]}|u_2-v_2|\Big]\\&+
\frac{c}{2}\text{sup}_{t\in [0,x]}\Big[\text{sup}_{t\in
[0,x]}|u_1-v_1|+ \text{sup}_{t\in [0,x]}|u_2-v_2|\Big]\\ & \leq
\frac{c}{2}\Big[2 \text{sup}_{x\in I}|u_1-v_1|+2\text{sup}_{x\in
I}|u_2-v_2|\Big]\\&=c\|u-v\|_u.
\end{align*}

 This shows that $T$ is a contraction mapping, so it has a unique
fixed point say $u(x)$ in $B(I)^2$ such that $Tu(x) = u(x).$ So
there is a solution in $B(I)^2$ that satisfy $ u(x)= u(0) - z J\int_0^x \mu(t)u(t).$ As
already seen that $\operatorname{tr} H(x)\equiv 1 $ implies the limit
point at the both end points.

  This means that for $z\in \C^{+}$
there exists (unique up to a factor) solutions $f_{\pm}(x,z) = u(x,z) \pm m_{\pm} (z) v(x,z) $ of
\ref{ca5} such that $f_{-} \in L^2(H, \R_-), f_{+}\in L^2(H,
\R_+)$ where $ u(x,z)$ and $v(x,z)$ are any two linearly independent solutions of \ref{ca5}. Let $ x\in \R$, and $\alpha = 0$, the Dirichlet boundary
condition at $x$ that is  $ u_1(x,z)= v_2(x,z)= 0,  v_1(x,z)= u_2(x,z) = 1$,   the Titchmarsh-Weyl $m$-functions of the system \ref{ca5} are alternately defined as $m_{\pm} (x,z)= \pm
\frac{f_{\pm_2}(x,z)}{f_{\pm_1}(x,z)}.$ Recall that $m_{\pm}(x,z)$ are Herglotz functions. So
the boundary value of these $m$ functions  are defined by
$m_{\pm}(x,t)\equiv \lim_{y\rightarrow 0}m_{\pm}(x,t+iy).$
\
\begin{defi} Let $A\subset\R$ be a Borel set. We call a Hamiltonian $\mu \in  \mathcal V_{2\times 2}$  reflectionless on $A$ if
\begin{align}\label{ref}m_{+}(x,t)=-\overline{m_{-}(x,t)} \end{align} for almost every  $ t\in A$ and for some $x\in \R$.
\end{defi}
The set of reflectionless hamiltonian  on $A$ is denoted by $\mathcal R(A).$ Notice that the equation
\ref{ref} is independent of the choice of boundary condition and
the choice of a point. Suppose \ref{ref} is true for a boundary
condition $\alpha$ at $0$. Let $m_+^{\alpha}(z)$ be the unique coefficient such that $f(x,z)=u_{\alpha}(x,z)+m_+^{\alpha}(z)v_{\alpha}(x,z) \in L^2(H,\R_+).$  Suppose $ T_{\alpha}(x,z)=
    \begin{pmatrix}u_{\alpha _1}(x,z)&v_{\alpha _1}(x,z)\\u_{\alpha_2}(x,z)&v_{\alpha_2}(x,z)\end{pmatrix}$ with  $ T_{\alpha}(0,z)=
    \begin{pmatrix}\sin{\alpha}&\cos{\alpha}\\-\cos{\alpha}&\sin{\alpha}\end{pmatrix}$ and  $ T_{\beta}(x,z)=\begin{pmatrix}u_{\beta _1}(x,z)&v_{\beta _1}(x,z)\\u_{\beta_2}(x,z)&v_{\beta_2}(x,z)\end{pmatrix}$ with  $ T_{\beta}(0,z)=
    \begin{pmatrix}\sin{\beta}&\cos{\beta}\\-\cos{\beta}&\sin{\beta}\end{pmatrix} .$  Then \\ $
    T_{\alpha}(x,z)=T_{\beta}(x,z)\begin{pmatrix}\cos{\gamma}&\sin{\gamma}\\-\sin{\gamma}&\cos{\gamma}\end{pmatrix}$,
      where $\gamma= \beta-\alpha.$ \\

     Here $m_+^{\alpha}(z)\in\C$ is a
    unique number such that  \begin{align*} & T_{\alpha}(x,z)\begin{pmatrix}1\\ m_+^{\alpha}(z) \end{pmatrix} \in
    L^2(H,\R_+).\\ \Rightarrow & T_{\beta}(x,z)\begin{pmatrix}\cos{\gamma}&\sin{\gamma}
    \\-\sin{\gamma}& \cos{\gamma}\end{pmatrix} \begin{pmatrix}1 \\m_+^{\alpha}(z)\end{pmatrix} \in
    L^2(H,\R_+).\\ \Rightarrow & T_{\beta}(x,z)\begin{pmatrix}\cos{\gamma}+m_+^{\alpha}(z)\sin{\gamma}
    \\-\sin{\gamma}+m_+^{\alpha}(z)\cos{\gamma}\end{pmatrix} \in L^2(H,\R_+).\\ \Rightarrow & (\cos{\gamma}+m_+^{\alpha}(z)\sin{\gamma})T_{\beta}(x,z)\begin{pmatrix}1
    \\
    \frac{-\sin{\gamma}+m_+^{\alpha}(z)\cos{\gamma}}{\cos{\gamma}+m_+^{\alpha}(z)\sin{\gamma}}\end{pmatrix}\in
    L^2(H,\R_+).\end{align*} Since $  m_+^{\beta}(z)$ be the unique coefficient such that $ T_{\beta}(x,z)\begin{pmatrix}1\\ m_+^{\beta}(z) \end{pmatrix} \in
    L^2(H,\R_+) $  we must have,\begin{align*}
 & m_+^{\beta}(z)
=\frac{-\sin{\gamma}+m_+^{\alpha}(z)\cos{\gamma}}{\cos{\gamma}+m_+^{\alpha}(z)\sin{\gamma}} = \begin{pmatrix}\cos{\gamma}&-\sin{\gamma}\\\sin{\gamma}&\cos{\gamma}\end{pmatrix}m_+^{\alpha}(z).\end{align*}
On the other hand, exactly in the same way,
\begin{align*} & T_{\alpha}(x,z) \begin{pmatrix}1\\ -m_-^{\alpha}(z)
\end{pmatrix}\in L^2(H,\R_-).  \\\Rightarrow  & T_{\beta}(x,z)\begin{pmatrix}\cos{\gamma}&\sin{\gamma}\\-\sin{\gamma}&\cos{\gamma}\end{pmatrix}
    \begin{pmatrix}1\\ -m_-^{\alpha}(z) \end{pmatrix}\in L^2(H,\R_-),    \text{ where }\gamma=
    \beta-\alpha.\\      \Rightarrow  & T_{\beta}(x,z)\begin{pmatrix}\cos{\gamma}-
m_-^{\alpha}(z)\sin{\gamma}\\-\sin{\gamma}-m_-^{\alpha}(z)\cos{\gamma}\end{pmatrix}
   \in L^2(H,(-\infty,0])\\   \Rightarrow & (\cos{\gamma}- m_-^{\alpha}(z)\sin{\gamma})T_{\beta}(x,z)
   \begin{pmatrix}1\\ \frac{-\sin{\gamma}-m_-^{\alpha}(z)\cos{\gamma}}{\cos{\gamma}- m_-^{\alpha}(z)\sin{\gamma}}\end{pmatrix}\in L^2(H,\R_-)
\\  \Rightarrow & -m_-^{\beta}(z) =\frac{-\sin{\gamma}-m_-^{\alpha}(z)\cos{\gamma}}{\cos{\gamma}-
m_-^{\alpha}(z)\sin{\gamma}}
 =  \begin{pmatrix}\cos{\gamma}&\sin{\gamma}\\-\sin{\gamma}&\cos{\gamma}\end{pmatrix}m_-^{\alpha}(z).\end{align*}
Let $
P_+(0,z)=\begin{pmatrix}\cos{\gamma}&-\sin{\gamma}\\\sin{\gamma}&\cos{\gamma}\end{pmatrix}\text{
and }
P_-(0,z)=\begin{pmatrix}\cos{\gamma}&-\sin{\gamma}\\\sin{\gamma}&\cos{\gamma}\end{pmatrix},
$ so that \\ $ m_-^{\beta}(z)=
P_-(0,z)m_-^{\alpha}(z),\,\, m_+^{\beta}(z)=
P_+(0,z)m_+^{\alpha}(z) \text{ and } \\
\begin{pmatrix}1& 0\\ 0& -1\end{pmatrix}P_+(0,z)= P_-(0,z)\begin{pmatrix}1& 0\\ 0&
-1\end{pmatrix}.$ By simple calculation we can see that \[
m_+^{\beta}(t)=-\overline{m_-^{\beta}(t)}. \] Similarly,
equation \ref{ref} is independent of the choice of the point.
Suppose \\$T_0(x,z)= \begin{pmatrix}u_1(x,z)& v_1(x,z)\\
u_2(x,z)& v_2(x,z)\end{pmatrix}$ be  solutions with the boundary
conditions at $0$. Then  $T_0(x,z)= T_a(x,z)\begin{pmatrix}u_1(a,z)& v_1(a,z)\\
u_2(a,z)& v_2(a,z)\end{pmatrix}.$ Suppose $m_{\pm}(0,z) \in \C $
be the unique coefficients such that $ f_{\pm}(x,z)= u(x,z)\pm
m_{\pm}(0,z)v(x,z) \in L^2(H,\R_{\pm}).$ \\ In another way,
$ T_0(x,z)\begin{pmatrix}1 \\
\pm m_{\pm}(0,z) \end{pmatrix}\in L^2(H,\R_{\pm}) . $ \\$  \Rightarrow
  T_a(x,z)\begin{pmatrix}u_1(a,z)& v_1(a,z)\\
u_2(a,z)& v_2(a,z)\end{pmatrix}\begin{pmatrix}1\\
\pm m_{\pm}(0,z) \end{pmatrix}\in L^2(H,\R_{\pm})
 .\\  \Rightarrow   m_{\pm}(a,z)  = \frac{u_2(a,z)\pm m_{\pm}(0,z)v_2(a,z)}{u_1(a,z)\pm m_{\pm}(0,z)v_1(a,z)}  =
\begin{pmatrix}v_2(a,z)&  \pm u_2(a,z)\\
\pm v_1(a,z)& u_1(a,z)\end{pmatrix}m_{\pm}(0,z). $ \\ Let
$T_{\pm}(z)=\begin{pmatrix}v_2(a,z)&  \pm u_2(a,z)\\
\pm v_1(a,z)& u_1(a,z)\end{pmatrix} $, then $\begin{pmatrix}1& 0\\
0& -1\end{pmatrix} T_+(z)= T_-(z)\begin{pmatrix}1&
0\\0&-1\end{pmatrix}.$ By calculation we see that\[  m_+(a,t) = - \overline{m_-(a,t)} \]

As already mentioned, the Weyl m-functions $m(x,.)$ are Herglotz
functions and by the Herglotz representation theorem  they have unique integral representation of the
form,
\begin{align*} m(x,z)= a + bz + \int_{\R}\Big(\ \frac{1}{t-z}-\frac{t}{t^2+1}\Big)d\nu(t), \,\,\,\,\, z\in \C^+ \end{align*} for some positive Borel measure $\nu$
on $\R$ with $\int\frac{1}{t^2+1}d\nu < \infty$ and numbers $a\in
\R,\, b\geq 0.$  We call the measure $\nu$ in above integral
representation of $m$ as  { \em spectral measure} of the system  \ref{ca}.

Recall that a Borel measure $\rho$ on $\R$ is called {\em absolutely continuous } if  $ \rho(B)=0$ for all Borel sets $B\subset\R$ of Lebesgue measure zero. By the Radon-Nikodym Theorem, $\rho$ is absolutely continuous if and only if $d\rho= f(t)dt$ for some density $f \in L_{loc}^1(\R), \,\, f\geq 0.$ If $\nu$ is supported by a Lebesgue null set that is, there exists a Borel set $B\subset\R$ with $ |B|=\rho(B^c)=0$, then we say that $\rho$ is {\em singular}. By Lebesgue's decomposition theorem, any Borel measure $\rho$ on $\R$ can uniquely decomposed into absolutely continuous and singular parts:\[ \rho = \rho_{ac} + \rho_{s}. \] The  { \em essential support } $ \Sigma _e $ of a Borel measure $\rho$ on $\R$ is the complement of a largest open set $ U\subset \R $ such that $ \rho(U)=0.$

Let  $\mu$ be a measure on $\R.$
The shift by $x$ of the measure $\mu$ , denoted by $S_x\mu$, is  defined by \[ \int_{\R}
f(t)d(S_x\mu) = \int_{\R} f(t-x)d\mu(t).\] If $\mu \in \mathcal V_{2\times 2}$ is such that $d\mu= H(t)dt $ is a
locally integrable Hamiltonian $ H $, then this reduces to the shift map
$(S_xH)(t)= H(x+t).$ \begin{defi} The $ \omega $ limit set of the Hamiltonian
$\mu \in \mathcal V_{2\times 2}$ under the shift map is defined as,
\[ \omega(\mu)= \{ \nu \in \mathcal V_{2\times 2}: \text { there exist $x_n\rightarrow \infty $ so that } d(S_{x_n}\mu,\nu)\rightarrow 0\}.\]
 \end{defi} Then as in \cite{CR1} we can see that $ \displaystyle \omega(\mu) \subset \mathcal V_{2\times 2}$  is compact, non-empty and $S$ is a homeomorphism  on $ \omega(\mu).$ Moreover, $ \omega(\mu)$ is connected.

\section{Main theorem and its proof } \label{mainth}

 We are now ready to
state the Remling's theorem for Canonical System on $\R_+.$
\begin{theorem}
 \label{RT} Let $\mu \in \mathcal V_{2\times 2} $  be a (half line)
Hamiltonian, and let $\Sigma_{ac}$ be the essential support of the
absolutely continuously part of the spectral measure. Then \[
\omega(\mu)\subset \mathcal R ( \Sigma_{ac}).\] \end{theorem}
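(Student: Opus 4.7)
The plan is to follow the strategy of \cite{RC1}: first prove a Breimesser-Pearson type theorem for canonical systems, then combine it with continuity of the Weyl $m$-functions under the topology on $\mathcal{V}_{2\times 2}$. Fix $\nu\in\omega(\mu)$ with $x_n\to\infty$ satisfying $d(S_{x_n}\mu,\nu)\to 0$. The shifted Hamiltonians $S_{x_n}\mu$ may be regarded as whole-line Hamiltonians (extended by $\mu$ on $[0,x_n]$ to the left of the origin), and $\nu$ inherits a whole-line structure. Writing $m^\nu_\pm(x,z)$ for the $m$-functions of $\nu$ as in Section \ref{Weylth}, the goal is to show that $m^\nu_+(0,t)=-\overline{m^\nu_-(0,t)}$ for a.e. $t\in\Sigma_{ac}$.

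First I would establish continuity of the $m$-functions in the Hamiltonian. Because $\operatorname{tr}H\equiv 1$ forces the limit-point case at both endpoints, the $m$-functions arise as limits of nested Weyl disks whose radii shrink to zero. Solutions of $Ju'=z\mu u$ depend continuously on $\mu$ in the metric $d$ by the contraction-mapping construction in Section \ref{topo}, so the transfer matrices converge locally uniformly in $(x,z)\in\R\times\C^+$. Combined with uniform control of the Weyl radii, this yields $m^{S_{x_n}\mu}_\pm(0,z)\to m^\nu_\pm(0,z)$ uniformly on compact subsets of $\C^+$, and hence convergence of the Herglotz boundary values in a suitably weak sense on $\R$.

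The heart of the proof is the Breimesser-Pearson theorem for canonical systems: for any Borel $A\subset\Sigma_{ac}$ of finite Lebesgue measure and any Borel $B\subset\R$,
\begin{equation*}
\lim_{n\to\infty}\int_A\Bigl|\omega\bigl(m^\mu_+(x_n,t+i0),B\bigr)-\omega\bigl(-\overline{m^{x_n}_-(x_n,t+i0)},B\bigr)\Bigr|\,dt=0,
\end{equation*}
where $\omega(w,\cdot)$ denotes harmonic measure on $\C^+$ viewed from $w$, and $m^{x_n}_-$ is the backward $m$-function at $x_n$ computed from the truncation $[0,x_n]$. To prove this I would exploit that the transfer matrix $T(x_n,t+i0)$ acts as a M\"obius transformation carrying $m^\mu_+(0,t+i0)$ to $m^\mu_+(x_n,t+i0)$, while $-\overline{m^{x_n}_-(x_n,t+i0)}$ is the image of $\overline{m^\mu_+(0,t+i0)}$ under the same map composed with the reflection $w\mapsto -\bar w$. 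On $\Sigma_{ac}$, by general Herglotz theory the boundary value $m^\mu_+(0,t+i0)$ has strictly positive imaginary part for a.e. $t$, so $\omega(m^\mu_+(0,t+i0),\cdot)$ is absolutely continuous with Poisson-kernel density bounded above by a Lebesgue-integrable function on $A$. The technical core is then a harmonic-measure contraction estimate: two starting points on opposite sides of $\R$ with the same real part are driven together by SL$(2,\R)$-action for a.e. $t\in\Sigma_{ac}$, which one quantifies via an $L^1$-bound on the Poisson-kernel difference and a Fatou-type argument.

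Granting Breimesser-Pearson, the conclusion is essentially soft. Fix a Borel $B\subset\R$ and pass the estimate through the limit $n\to\infty$: by the continuity step above, the integrand converges pointwise a.e. to $|\omega(m^\nu_+(0,t+i0),B)-\omega(-\overline{m^\nu_-(0,t+i0)},B)|$, and dominated convergence forces this integral to vanish. Since $B$ is arbitrary, the two harmonic measures coincide for a.e. $t\in\Sigma_{ac}$; two points in $\C^+$ with identical harmonic measures must be equal, giving the reflectionless identity $m^\nu_+(0,t)=-\overline{m^\nu_-(0,t)}$ a.e. on $\Sigma_{ac}$. By the invariance of \eqref{ref} under change of point and boundary condition proved in Section \ref{topo}, $\nu\in\mathcal R(\Sigma_{ac})$. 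The principal obstacle is the Breimesser-Pearson estimate itself; once the harmonic-measure transport under unimodular M\"obius action on $\C^+$ is controlled and paired with the a.e. positivity of $\operatorname{Im} m^\mu_+(0,t+i0)$ on $\Sigma_{ac}$, the remainder of the argument is the standard soft translation via compactness of $(\mathcal{V}_{2\times 2},d)$.
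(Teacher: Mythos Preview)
Your overall architecture matches the paper's: a continuity lemma for $\mu_\pm\mapsto m_\pm^\mu$, a Breimesser--Pearson theorem for canonical systems, and a harmonic-measure argument to extract the reflectionless identity. However, your final passage to the limit contains a genuine gap. You write that ``the integrand converges pointwise a.e.'' and then invoke dominated convergence; but locally uniform convergence $m_\pm(x_n,z)\to M_\pm(z)$ on $\C^+$ does \emph{not} imply pointwise a.e.\ convergence of the boundary values $m_\pm(x_n,t+i0)\to M_\pm(t+i0)$. (Take Herglotz functions associated with discrete measures approximating Lebesgue measure: the functions converge locally uniformly, yet the boundary imaginary parts jump from $0$ a.e.\ to a positive constant.) The paper circumvents this by invoking the equivalence between locally uniform convergence and convergence \emph{in value distribution} (Theorem~\ref{VD}): one obtains directly
\[
\int_A\omega_{m_\pm(x_n,t)}(S)\,dt\ \longrightarrow\ \int_A\omega_{M_\pm(t)}(S)\,dt,
\]
which combined with Breimesser--Pearson gives $\int_A\omega_{M_-(t)}(-S)\,dt=\int_A\omega_{M_+(t)}(S)\,dt$ for all Borel $A\subset\Sigma_{ac}$ of finite measure and all $S$. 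The pointwise identity then follows from Lebesgue differentiation over rational-endpoint intervals $S$, not from pointwise convergence under the limit.

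A smaller issue concerns your sketch of Breimesser--Pearson: the phrase ``two starting points on opposite sides of $\R$ with the same real part are driven together by $\SL(2,\R)$-action'' misidentifies the mechanism. The paper's engine is a Weyl-disk contraction estimate: the hyperbolic distance between $-v_2(N,z)/v_1(N,z)$ and $-(u_2(N,z)+\bar w\,v_2(N,z))/(u_1(N,z)+\bar w\,v_1(N,z))$ tends to $0$ as $N\to\infty$, uniformly for $z$ in compacta of $\C^+$ and all $w$ with $\operatorname{Im}w\ge 0$. Paired with a partition of $A$ into pieces on which $m_+(0,t)$ is nearly constant in the hyperbolic metric and with the uniform boundary approximation of Lemma~\ref{lem1.4}, this yields the theorem. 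No reflection of $m_+(0,t)$ across $\R$ enters; rather, one compares the image of $m_+(0,t)\approx m_j$ with the image of $\bar m_j$ (and of $\infty$) under the same real M\"obius map, and the conjugation appears only through the elementary identity $\omega_{-w}(-S)=\omega_{\bar w}(S)$ at the end.
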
 In
order to prove this theorem we approach the similar way as in
\cite{CR1}.\\
Let $\mu \in \mathcal V_{2\times 2}$ is a whole line Hamiltonian. We
write $\mu_{\pm}$ for the restrictions of $\mu$ to $\R_{\pm}.$
Denote the set of restrictions by $\mathcal V_{\pm} = \{\mu_{\pm} : \mu
\in \mathcal V_{2\times 2} \}.$\\

Let $\mathbb{H}$ denote the set of all Herglotz functions, that is $ \mathbb{H} = \{ F: \C^+ \rightarrow
\C^+ : F \text{ is holomorphic } \}.$ So $ \{M_{\pm}=
m^{\mu}_{\pm}(0,z) \}\subset \mathbb H .$  First lets prove the following lemma.

\begin{lemma}\label{Lem6.2} The maps $ \mathcal V_{\pm} \longmapsto \mathbb H , \,  \mu_{\pm}\longmapsto M_{\pm}= m^{\mu}_{\pm}(0,z)$
 are homeomorphism onto their images.\end{lemma}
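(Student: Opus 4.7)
The plan is to check three things: (i) injectivity of $\mu_{\pm}\mapsto M_{\pm}$, (ii) continuity of the forward map (with $\mathbb{H}$ carrying the topology of locally uniform convergence on $\mathbb{C}^+$), and (iii) continuity of the inverse on the image. Item (iii) is automatic once (i) and (ii) are in hand: the space $\mathcal{V}_{2\times 2}$ is compact by Section \ref{topo}, the same construction makes $\mathcal{V}_{\pm}$ a compact metric space, and $\mathbb{H}$ is Hausdorff, so a continuous bijection from a compact space to a Hausdorff one is automatically a homeomorphism onto its image. Item (i) is the classical inverse uniqueness statement for trace-normalized canonical systems (de Branges): the half-line Weyl function $m^{\mu}_{\pm}(0,\cdot)$ determines $\mu_{\pm}$, which I would invoke as a known result.

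The real work is (ii). Assume $\mu^{(n)}\to\mu$ in $\mathcal{V}_+$; the goal is to show $m^{\mu^{(n)}}_+(0,\cdot)\to m^{\mu}_+(0,\cdot)$ locally uniformly on $\mathbb{C}^+$. I would first show that the fundamental solution matrices $T^{\mu^{(n)}}(x,z)$ converge to $T^{\mu}(x,z)$ uniformly on compact subsets of $[0,\infty)\times\mathbb{C}^+$. This is done by iterating the integral equation $u(x)=u(0)-zJ\int_0^x d\mu(t)u(t)$ (the contraction from Section \ref{topo}), observing that each Picard iterate is continuous and that vague convergence together with the uniform trace bound $\operatorname{tr}\mu^{(n)}\equiv 1$ lets one pass to the limit iterate-by-iterate, with a contraction constant uniform in $n$. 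Consequently the Weyl disk data in \eqref{cr} vary continuously in $\mu$, so $D_N^{\mu^{(n)}}(z)\to D_N^{\mu}(z)$ in Hausdorff distance for each fixed $N$. Next, $m^{\mu^{(n)}}_+(0,\cdot)\in D_1^{\mu^{(n)}}(z)$, and this disk is locally uniformly bounded on $\mathbb{C}^+$ by \eqref{radi} and $\operatorname{tr}\mu\equiv 1$ on $[0,1]$, so the family $\{m^{\mu^{(n)}}_+(0,\cdot)\}$ is normal by Montel's theorem. Any subsequential limit $F$ lies in $D_N^{\mu}(z)$ for every $N$ (combine $m^{\mu^{(n_k)}}_+(0,z)\in D_N^{\mu^{(n_k)}}(z)$ with Hausdorff convergence of disks), and the limit-point case forces $\bigcap_N D_N^{\mu}(z)=\{m^{\mu}_+(0,z)\}$; so $F=m^{\mu}_+(0,\cdot)$, and since every subsequence has a sub-subsequence converging to the same limit, the whole sequence converges.

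The main obstacle is the first step just outlined: passing the vague convergence $\mu^{(n)}\to\mu$ through the Picard iteration. Vague convergence gives $\int\varphi\,d\mu^{(n)}\to\int\varphi\,d\mu$ only for a \emph{fixed} continuous $\varphi$ of compact support, whereas each Picard iterate is a function that itself depends on $n$. What rescues the argument is the trace normalization: $\operatorname{tr}\mu^{(n)}\equiv 1$ gives a uniform total variation bound on each compact interval, so on every bounded interval the iterates form an equicontinuous, equibounded family (Arzel\`a--Ascoli), and one can reduce the varying-integrand convergence to the fixed-integrand vague convergence by a standard uniform approximation. Once this technical step is settled, the remainder is a routine combination of the Weyl disk machinery of Section \ref{Weylth} with the normal families argument above.
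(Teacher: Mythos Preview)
Your proposal is correct and follows the same three-part architecture as the paper: injectivity from the inverse spectral theorem for trace-normed canonical systems (the paper cites Winkler, you cite de Branges---the same underlying result), forward continuity via convergence of solutions on compact $x$-intervals, and inverse continuity for free from compactness of $\mathcal V_{\pm}$ and Hausdorffness of $\mathbb H$.

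The one place your argument genuinely adds to the paper's is the passage from solution convergence to $m$-function convergence. The paper shows, via the uniform contraction bound $\|T_n\|\le\tfrac12$ on a short interval, Neumann series for equiboundedness, equicontinuity, and Arzel\`a--Ascoli, that the solutions $u_n$ converge along a subsequence to a solution of the limiting system; it then simply writes ``$\mu_n\to\mu\Rightarrow u_n\to u\Rightarrow m_+^{\mu_n}(0,z)\to m_+(0,z)$'' without explaining why convergence of transfer matrices on finite intervals forces convergence of the half-line $m$-function. Your route---Hausdorff convergence of the Weyl disks $D_N^{\mu^{(n)}}(z)$ from \eqref{cr}, Montel normality of $\{m_+^{\mu^{(n)}}(0,\cdot)\}$, and the limit-point nesting $\bigcap_N D_N^{\mu}(z)=\{m_+^{\mu}(0,z)\}$---supplies exactly this missing link, since finite-interval solution data only pins $m_+$ down to the disk $D_N$ and one needs the shrinking-disk argument to identify the limit. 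So your proof is essentially the paper's with the gap filled in; the Picard-iterate versus Arzel\`a--Ascoli distinction in how you obtain solution convergence is cosmetic by comparison.
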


\begin{proof}  We have $\mu_+ = H_+(x)dx$. By Theorem 1 in \cite{HW1} for every canonical system with Hamiltonian  $H_+$ there is unique
$m_+(0,z).$ Conversely for every $m_+ \in \mathbb H$ there exists
a unique Hamiltonian $H_+$ on $\R_+$ such that $m_+$ is a Wyle
coefficient of the canonical system corresponding to $H_+$. So
$\mu_+ \mapsto M_+$ is one-to-one. Next we show that the map is
homeomorphism. Consider the canonical system \ref{ca} on $\R_+.$
Suppose $\mu_n\rightarrow \mu $ in $\nu^C_+$. That is $H_n(x)dx
\rightarrow H(x)dx$ for some Hamiltonian $ H_n(x),  H(x) .$ Let
$u_n$ be the solution of Canonical System with Hamiltonian $
H_n(x).$  Let $K$ be a compact subset of $\C^+$ contained in a
ball $B(0,R).$ Suppose a subinterval $[0,\eta]$ be such that $\eta
= \frac{1}{8R}.$ We claim that $u_n$ has convergent subsequence on
 $[0,\eta].$ Define the operators $T_n: C[0,\eta]\longrightarrow C[0,\eta]
 $ by \[ T_n u(x) = -zJ\int _0^x H_n(t)u(t)dt. \] Since \begin{align*}\|T_n\| = & \sup_{\|u\|_{\infty}=1}\|-zJ\int _0^x
 H_n(t)u(t)dt\|\\ \leq & |z|\|u\|_{\infty}\displaystyle \int _0^x |H_n(t)|dt \\ & \leq R4\eta = R4\frac{1}{8R}=
 \frac{1}{2},\end{align*} $\|T_n\| $ are uniformly bounded.
 So the Neumann series $(1-T_n)^{-1}= \displaystyle \sum _{k=0}^{\infty}
 T_n^k$ is convergent. Here $ u_n(x) = (1-T_n)^{-1}(u_0),\,\,\,\ u_0= \begin{pmatrix}1\\
 0\end{pmatrix}.$\\ $\|u_n\|\leq \|(1-T_n)^{-1}\| \|u_0\| = \|(1-T_n)^{-1}\| \leq \displaystyle \sum _{k=0}^{\infty}
 \|T_n\|^k = \displaystyle \sum _{k=0}^{\infty}(\frac{1}{2})^k = 2
 .$ So $\{u_n(x)= n\in \N \}$ is uniformly bounded in $n$ on
 $[0,\eta]$ and locally uniformly in $z$. Similar argument shows
 that $u_n$ remains bounded on $[\eta, \eta+p]$ so that $u_n$ are
 eventually bounded uniformly on $[0,N].$ Moreover, $u_n$ are
 equicontinuous. Let $\epsilon > 0$ be given. Since $u_n$ are
 solutions for the system \ref{ca} we have, \[u_n(x)-u_n(x_0)= -zJ\int _{x_0}^x
 H_n(t)u_n(t)dt.\]\begin{align*} \|u_n(x)-u_n(x_0)\|  & \leq  |z|\|u_n\|\int _{x_0}^x |H_n(t)|dt
  \\ &= |z|\|u_n\| 4\eta\|x-x_0\| \\ &\leq R
  2.4\eta\|x-x_0\|.\end{align*} Let  $\delta =
  \frac{\epsilon}{8R\eta}$  then  $ \|u_n(x)-u_n(x_0)\| < \epsilon, \text{  if  }  \|x-x_0\| < \delta  \text{   for all n
  }.$ By Arzella-Ascolli Theorem  $\{u_n \}$ has convergent
  subsequence say $u_{n_j}\rightarrow u.$ We show that $u$
  satisfies the Canonical System corresponding to $H(x).$ \begin{align*}u_{n_j}(x)-u_{n_j}(0)  & = -zJ\int _{0}^x
 H_{n_j}(t)u_{n_j}(t)dt \\ & = -zJ\int _{0}^x H_{n_j}\big((t)u_{n_j}(t)- u(t) \big)dt -zJ\int _{0}^x
 H_{n_j}(t)u(t)dt . \end{align*} Since  $\parallel -zJ\int _{0}^x H_{n_j}\big((t)u_{n_j}(t)- u(t) \big)dt
 \parallel\leq |z|\| H_{n_j}\|_{L_1(0,x)} \|u_{n_j}- u \|$ ,\\
  $\displaystyle \lim_{j\rightarrow \infty} -zJ\int _{0}^x H_{n_j}\big((t)u_{n_j}(t)- u(t) \big)dt =
  0.$ Hence, taking the limit as $j\rightarrow \infty$ we get,
  $ u(x)-u(0) = \int _{0}^x H(t)u(t)dt . $ So
   $\mu_n\rightarrow \mu  \Rightarrow  u_n \rightarrow u \Rightarrow m_+^{\mu_n}(0,z)\rightarrow
   m_+(0,z).$ This proves the continuity of the map on the
   interval $[0,N].$  Inverse of a continuous map on compact set
   is also continuous. Hence the map is homeomorphic. Exactly, the
   same way $\mu_-\longleftrightarrow M_-$ is also a
   homeomorphism.
\end{proof}

\subsection{ Breimesser-Pearson Theorem on canonical systems }

 For $z=x+iy \in \C^+, \,\, \omega_{z}(S) = \frac{1}{\pi}\int_{S}\frac{y}{(t-x)^2+y^2}dt$,   denotes the harmonic measure in the upper-half plane.
 For any $G \in \mathbb H $ and $t \in \R $ we define $ \omega_{ G(t)}(S)$ as the limit  \[ \omega_{ G(t)}(S) = \displaystyle \lim_{y\rightarrow 0+}\omega_{ G(t+iy)}(S).\] For complete description about the Herglotz functions and harmonic measures, see \cite{CR1}.

\begin{lemma}\cite{CR1} \label{lem1.4}  Let $A\subset\R$ be a Borel Set with $|A|<\infty$. Then \begin{align*} \displaystyle \lim_{y\rightarrow 0+}  \sup_ {F \in \mathcal H ; S \subset\R}  \Big| \int_A \omega_{ F(t+iy)}(S)dt - \int_A \omega_{F(t)}(S)dt \Big|= 0   \end{align*}. \end{lemma}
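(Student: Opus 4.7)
The plan is to reduce the assertion to an $L^1$ approximate-identity fact for the Poisson kernel, with a bound that is manifestly uniform in $F$ and $S$. Fix $F\in\mathbb H$ and a Borel set $S\subset\R$, and set $g(z):=\omega_{F(z)}(S)$ for $z\in\C^+$. Since $w\mapsto\omega_w(S)$ is the Poisson integral of $\mathbf 1_S$ (hence bounded harmonic on $\C^+$) and $F$ is holomorphic, $g$ is harmonic on $\C^+$ with $0\le g\le 1$. Crucially, the bound $\|g\|_\infty\le 1$ holds \emph{uniformly} in the choice of $F$ and $S$. By Fatou's theorem, $g$ has boundary values $g(t)=\omega_{F(t)}(S)$ almost everywhere, and since $g$ is bounded harmonic it is recovered as the Poisson integral of those boundary values:
$$g(t+iy)=(P_y*g)(t),\qquad P_y(s)=\frac{1}{\pi}\,\frac{y}{s^2+y^2}.$$

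The second step is to apply Fubini to exchange the $t$- and $s$-integrations. On the one hand,
$$\int_A g(t+iy)\,dt=\int_\R g(s)\Big(\int_A P_y(t-s)\,dt\Big)\,ds=\int_\R g(s)\,\omega_{s+iy}(A)\,ds,$$
while $\int_A g(t)\,dt=\int_\R g(s)\,\mathbf 1_A(s)\,ds$. Subtracting and using $\|g\|_\infty\le 1$ gives the pointwise estimate
$$\Big|\int_A\omega_{F(t+iy)}(S)\,dt-\int_A\omega_{F(t)}(S)\,dt\Big|\le \|P_y*\mathbf 1_A-\mathbf 1_A\|_{L^1(\R)}.$$
The right-hand side does not involve $F$ or $S$, so taking the supremum over $F\in\mathbb H$ and $S\subset\R$ preserves the bound.

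The final step is purely harmonic-analytic: since $|A|<\infty$ we have $\mathbf 1_A\in L^1(\R)$, and the Poisson kernel is a classical $L^1$ approximate identity, so $\|P_y*\mathbf 1_A-\mathbf 1_A\|_{L^1}\to 0$ as $y\to 0+$. Combined with the previous estimate this proves the lemma. The argument contains no genuinely hard step; the only subtlety is justifying the Poisson representation $g(t+iy)=(P_y*g)(t)$, which follows from bounded-harmonic Fatou theory on $\C^+$ (or, equivalently, from representing $g$ as the Poisson integral of its $L^\infty$ boundary data directly). Everything else reduces to Fubini plus the standard fact that convolution with $P_y$ converges to the identity in $L^1$.
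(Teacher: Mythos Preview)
Your argument is correct. The paper itself does not prove this lemma; it is simply quoted from \cite{CR1} (Remling, \emph{Math.\ Phys.\ Anal.\ Geom.}\ 2007), so there is no in-paper proof to compare against. Your route---observe that $g(z)=\omega_{F(z)}(S)$ is bounded harmonic with $0\le g\le 1$, recover $g$ from its boundary data via the Poisson integral, then use Fubini to turn the quantity in question into $\int_{\R} g(s)\bigl[(P_y*\mathbf 1_A)(s)-\mathbf 1_A(s)\bigr]\,ds$ and estimate by $\|P_y*\mathbf 1_A-\mathbf 1_A\|_{L^1}$---is clean and gives exactly the uniformity in $F$ and $S$ that the statement requires, since the only property of $g$ used is $\|g\|_\infty\le 1$. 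The one point worth making explicit when you write it up is the Poisson representation step: a bounded harmonic function on $\C^+$ is the Poisson integral of its a.e.\ boundary values (transfer Fatou's theorem from the disk via the Cayley map; the point at infinity has measure zero and contributes nothing), so $g(t+iy)=(P_y*g)(t)$ is legitimate. With that justified, the remaining steps are just Fubini and the $L^1$ approximate-identity property of $P_y$.
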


\begin{defi} If $F_n, F \in \mathbb H ,$ we say that $F_n \rightarrow F$ in value distribution if
  \begin{align}\label{def1} \displaystyle \lim_{n\rightarrow \infty} \int_A \omega_{F_n(t)}(S)dt =\int_A \omega_{F(t)}(S)dt
\end{align}for all Borel set $A, S \subset \R, |A|<\infty.$
 \end{defi} Notice that if the limit in the value distribution exists, it is unique.
\begin{theorem} \label{VD}\cite{CR1} Suppose $ F_n, F \in \mathcal H ,$ and let $a_n, a, $ and $ \nu_n, \nu$ be the associated numbers and measures, respectively, from the integral representation of Herglotz function. Then the following are equivalent:\begin{enumerate}
\item $F_n(z)\rightarrow F(z)$ uniformly on compact subsets of $\C^+;$
 \item $a_n \rightarrow a$ and $\nu_n \rightarrow \nu$ weak * on $ \mathcal M (\R_{\infty}),$ that is,\[ \displaystyle \lim_{n\rightarrow \infty} \int_{\R_{\infty}}f(t)d\nu_n(t)= \int_{\R_{\infty}}f(t)d\nu(t)\] for all $f\in C(\R_{\infty})$;
   \item  $ F_n\rightarrow F $ in value distribution.
  \end{enumerate}
\end{theorem}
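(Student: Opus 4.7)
The plan is to prove the equivalences in the order $(1)\Leftrightarrow(2)$, then $(1)\Rightarrow(3)$, then $(3)\Rightarrow(1)$ by a compactness argument.

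For $(1)\Leftrightarrow(2)$, this is the classical Herglotz/Nevanlinna convergence theorem. The direction $(2)\Rightarrow(1)$ is immediate: the representation formula
\[F_n(z)=a_n+b_n z+\int_{\R}\Big(\frac{1}{t-z}-\frac{t}{t^2+1}\Big)d\nu_n(t)\]
(with $b_n$ absorbed as mass of $\nu_n$ at $\infty$ under the usual reformulation on $\R_\infty$) converges pointwise on $\C^+$ if $\nu_n\to\nu$ weak-$*$ and $a_n\to a$, because for fixed $z\in\C^+$ the integrand is continuous on $\R_\infty$; Vitali then upgrades pointwise to locally uniform convergence. Conversely, from $(1)$ one extracts $b$ from $\lim_{y\to\infty}F(iy)/(iy)$, recovers $\nu$ by the Stieltjes inversion formula, and $a=\operatorname{Re}F(i)-\int t/(t^2+1)\,d\nu$; standard arguments (evaluating on smooth test functions and using Helly selection) give weak-$*$ convergence.

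For $(1)\Rightarrow(3)$, fix $y>0$. Since $F_n\to F$ uniformly on the compact set $A+iy\subset\C^+$, the values $F_n(t+iy)$ stay in a common compact subset of $\C^+$ where the harmonic measure $\omega_{w}(S)$ depends continuously on $w$ uniformly in $S\subset\R$. Together with $|\omega_{F_n(t+iy)}(S)|\le 1$ and $|A|<\infty$, bounded convergence yields
\[\lim_{n\to\infty}\int_A \omega_{F_n(t+iy)}(S)\,dt=\int_A \omega_{F(t+iy)}(S)\,dt.\]
Now invoke Lemma \ref{lem1.4}, which lets me send $y\to 0^+$ uniformly in $F\in\mathbb H$ on \emph{both} sides simultaneously, delivering $(3)$.

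For $(3)\Rightarrow(1)$, I use a normal-family/uniqueness argument. Compose $F_n$ with a Cayley transform to land in the unit disk; the resulting functions are uniformly bounded holomorphic maps, hence form a normal family, so every subsequence $F_{n_k}$ has a sub-subsequence $F_{n_{k_\ell}}$ converging locally uniformly on $\C^+$ to some $\tilde F$ which is either a Herglotz function or a constant in $\overline{\R}\cup\{\infty\}$. By the already-proved direction $(1)\Rightarrow(3)$, $F_{n_{k_\ell}}\to\tilde F$ in value distribution, but we also assume $F_{n_{k_\ell}}\to F$ in value distribution, so the uniqueness remark following the definition of value distribution forces $\tilde F=F$ (the constant cases are ruled out since a real constant $c$ has $\omega_c(S)=\chi_S(c)$ and an infinite constant yields zero harmonic measure on any bounded $S$, neither of which matches the nontrivial value distribution of $F$ unless $F$ itself is that constant). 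Thus every subsequence has a further subsequence converging locally uniformly to $F$, so the full sequence does.

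The main obstacle is the last implication: specifically, ruling out the degenerate limits in $\overline{\R}\cup\{\infty\}$ when passing to subsequential limits of Herglotz functions. This requires feeding well-chosen test sets $A$ and $S$ into the value-distribution hypothesis to confirm that $F$ is not concentrated on a single real value or escaping to infinity in a way inconsistent with the given limiting harmonic measures.
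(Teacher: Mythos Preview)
The paper does not prove Theorem \ref{VD}; it merely states it with a citation to \cite{CR1} and uses it as a black box in the proof of Theorem \ref{RT}. So there is no ``paper's own proof'' to compare against.

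That said, your sketch is essentially the standard argument (and is the one Remling gives in \cite{CR1}). The cycle $(1)\Leftrightarrow(2)$ via the representation formula and Stieltjes inversion, $(1)\Rightarrow(3)$ via Lemma \ref{lem1.4} and dominated convergence, and $(3)\Rightarrow(1)$ via normal families plus uniqueness of limits in value distribution is exactly how this is done. Your identification of the only real subtlety---excluding degenerate subsequential limits in $\overline{\R}\cup\{\infty\}$---is accurate, and your proposed remedy (testing against suitable $A,S$) is the right one. One small point: in the step $(1)\Rightarrow(3)$ you should be slightly more careful about the order of limits, since Lemma \ref{lem1.4} gives uniformity in $F$ but you are taking $n\to\infty$ first and then $y\to 0^+$; an $\epsilon/3$ arrangement using the uniformity in Lemma \ref{lem1.4} handles this cleanly.
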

Remling's theorem is in fact a reformulation of  Breimesser-Pearson theorem which state as follows

 \begin{theorem} \label{Bp}  Consider a half-line
Canonical System. Let $\Sigma_{ac}$ denotes the essential support
of absolutely continuous part of Spectral measure then for any
$A\subset\Sigma_{ac}, |A|< \infty $ and $S\subset\R$, we have
\[\displaystyle \lim_{N\rightarrow \infty} \Big( \int_{A}\omega_{m_{-}(N,t)}(-S)dt - \int_{A}\omega_{m_{+}(N,t)}(S)dt\Big)=
0.\]Moreover, the convergence is uniform in $S$.

\end{theorem}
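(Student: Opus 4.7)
The plan is to mirror Remling's proof of the Breimesser--Pearson theorem from \cite{CR1}, adapted to the canonical systems setting where the transfer matrix $T(N,z)$ plays the same role it does for Schr\"odinger/Jacobi. The first reduction is to observe that $\omega_{m_-(N,t)}(-S)=\omega_{-\overline{m_-(N,t)}}(S)$, so the statement says that the value distributions of $m_+(N,\cdot)$ and of $-\overline{m_-(N,\cdot)}$ on $A$ become asymptotically equal as $N\to\infty$. Since the reflectionless condition \eqref{ref} is exactly $m_+=-\overline{m_-}$, this makes the meaning transparent: on the ac spectrum the pointwise reflectionless identity is enforced only in the value-distribution sense.

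The main tool is the $SL(2,\R)$-equivariance of harmonic measure. For real $t$, the transfer matrix $T(N,t)$ lies in $SL(2,\R)$ (its determinant is the constant Wronskian $W(u,v)\equiv 1$), and as derived in Section~\ref{Weylth} via M\"obius relations, $m_\pm(N,z)$ is obtained from $m_\pm(0,z)$ by the corresponding real M\"obius action. Using the identity $\omega_{T\cdot w}(S)=\omega_w(T^{-1}(S))$ together with Lemma~\ref{lem1.4}, I would first replace $m_\pm(N,t)$ by $m_\pm(N,t+iy)$ with $y>0$ small (introducing an error uniform in $S$ that tends to $0$), transport everything back to $x=0$ via $T(N,t+iy)^{-1}$, and then send $y\downarrow 0$. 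The two integrals then reduce to
\[
\int_A \omega_{m_+(0,t)}\bigl(T(N,t)^{-1}(S)\bigr)\,dt \quad \text{and} \quad \int_A \omega_{-\overline{m_-(0,t)}}\bigl(T(N,t)^{-1}(S)\bigr)\,dt,
\]
with the moving set $T(N,t)^{-1}(S)$ now absorbing all the $N$-dependence, while the base points $m_\pm(0,t)$ stay fixed.

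The decisive step is to show that this transported difference vanishes uniformly in $S$. The crucial input is that for almost every $t\in\Sigma_{ac}$ one has $\mathrm{Im}\,m_+(0,t)>0$, and $\int_A \mathrm{Im}\,m_+(0,t)\,dt<\infty$. Following Remling, one applies a Breimesser--Pearson-type pointwise inequality comparing $\omega_z(B)$ with $\chi_B(w)$ in terms of $\mathrm{Im}\,z$ and the hyperbolic distance between $z$ and $w$; summing/integrating against the common base-point data at $x=0$ produces a bound that is independent of $S$ (since it only involves $\omega$ against a fixed point in $\C^+$). The hypothesis $t\in\Sigma_{ac}$ enters because only on this set does $m_+(0,t)$ have genuinely positive imaginary part, making the comparison effective; on its complement the contribution to $A$ is negligible by standard spectral-theoretic arguments together with Theorem~\ref{VD}.

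The main obstacle I anticipate is purely technical: transcribing Remling's estimate from the Schr\"odinger/Jacobi formulation into the canonical systems language, and keeping track of the correct sign conventions for how $T(N,z)$ acts on $m_+$ versus $m_-$ (the $m_-$ branch is the finite-interval Weyl function on $[0,N]$, whose boundary values are real almost everywhere, so the symmetry $z\mapsto -\bar z$ has to be interleaved with the M\"obius action carefully). Once the algebraic identity $\omega_{T\cdot w}(S)=\omega_w(T^{-1}(S))$ is verified in this setting and Lemma~\ref{lem1.4} is used to justify the boundary-value passage, Remling's contraction argument transfers to canonical systems essentially verbatim.
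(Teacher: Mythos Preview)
Your transport-back-to-$x=0$ strategy has a genuine gap on the $m_-$ side, and it is not merely a matter of sign bookkeeping. The equivariance identity $\omega_{T\cdot w}(S)=\omega_w(T^{-1}S)$ requires $T\in SL(2,\R)$, so it is only available at real $t$; your proposed transport via $T(N,t+iy)^{-1}$ with $y>0$ uses a matrix in $SL(2,\C)$, for which the identity fails (the image $T^{-1}S$ is not even a subset of $\R$). If instead you stay at real $t$, then $m_-(N,t)=-v_2(N,t)/v_1(N,t)$ is real and its transport back to $x=0$ is the fixed degenerate value coming from the boundary condition there (essentially $\infty$), so the expression $\omega_{-\overline{m_-(0,t)}}(T(N,t)^{-1}S)$ that you write down is not a meaningful base point in $\C^+$. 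In the half-line problem there is simply no $m_-(0,\cdot)$ to transport from, and this is where your reduction to ``fixed base points, moving set'' breaks.

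The paper's proof circumvents exactly this obstruction and does \emph{not} proceed by equivariance. Its engine is a hyperbolic contraction lemma proved from the Weyl-disk formulas of Section~\ref{Weylth}: for $z\in\C^+$,
\[
\gamma\Bigl(-\tfrac{v_2(N,z)}{v_1(N,z)},\ -\tfrac{u_2(N,z)+\bar w\,v_2(N,z)}{u_1(N,z)+\bar w\,v_1(N,z)}\Bigr)\le \frac{1}{\sqrt{I(I+1)}},\qquad I=(\mathrm{Im}\,z)\!\int_0^N\!\mathrm{Im}(u^*Hv)\,dx,
\]
and $I\to\infty$ as $N\to\infty$ uniformly on compacts. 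This says that the Weyl disk collapses, so $m_-(N,z)$ becomes hyperbolically close to the M\"obius image of \emph{any} $\bar w$ with $\mathrm{Im}\,w\ge 0$. The proof then partitions $A$ into pieces $A_j$ on which $m_+(0,t)$ is within hyperbolic distance $\epsilon$ of a fixed $m_j\in\C^+$; M\"obius invariance of $\gamma$ at real $t$ handles the $m_+$ side, while the contraction lemma (applied at $z=t+iy$, with Lemma~\ref{lem1.4} to descend to the boundary) handles the $m_-$ side by comparing it to the image of $\bar m_j$. Neither your equivariance step nor your vaguely invoked ``pointwise inequality comparing $\omega_z(B)$ with $\chi_B(w)$'' substitutes for this Weyl-disk contraction, which is the place where the limit $N\to\infty$ actually does work.
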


We  prove this theorem on canonical systems using the same technique as in \cite{CR1}.

The hyperbolic
distance of two points $w,z \in \C^+$ is defined as \[ \gamma(w,z)
= \frac{|w-z|}{\sqrt{ \text{Im}w}\sqrt{\text{Im}z}}.\] Hyperbolic distance and harmonic measure are intimately related as follows,
\[|\omega_{w}(S)-\omega_{z}(S)|\leq \gamma(w,z)\] for any $z,w \in \C^+ $ and any Borel set $S\subset\R.$ Moreover, if $F(z)= \alpha(z)+i\omega_{z}(S)$ , $\alpha(z)$ is a harmonic conjugate of $\omega_{z}(S)$ we have

 \begin{align} \label{hyprdistance}|\omega_{w}(S)-\omega_{z}(S)|\leq \frac{|\omega_{w}(S)-\omega_{z}(S)|}{
\sqrt{\omega_{w}(S)} \sqrt{\omega_{z}(S)}} \leq \gamma (F(w), F(z))\leq
\gamma(w,z).\end{align}

\begin{lemma}  Let $u(.,z), v(.,z)$ be the solution of the Canonical
System \ref{ca}, subject to the condition $u(0,z)=
\begin{pmatrix}1\\0\end{pmatrix}, v(0,z)= \begin{pmatrix}0\\1\end{pmatrix}.$
Let $w$ be any constant such that $ \text{Im}w \geq 0, $ for any $
N>0,$ and all $ z \in \C^+$, we have the estimate, \[ \gamma \Big(
-\frac{v_2(N,z)}{v_1(N,z)}, - \frac{u_2(N,z)+
\bar{w}v_2(N,z)}{u_1(N,z)+ \bar{w}v_1(N,z)} \Big) \leq
\frac{1}{\sqrt {I(I+1)}},\] where  $I = I(N,z)$ is the integral
defined by $ I(N,z)= ( \text{ Im}z) \int_0^N Im( u^* H v)dx.$
\end{lemma}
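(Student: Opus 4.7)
The strategy is to reduce the estimate to the symplectic identity $\det T(N,z)=1$, where $T(N,z)=(u(N,z),v(N,z))$ is the fundamental matrix of solutions, and then to finish with a direct algebraic calculation. Throughout, write $y=\operatorname{Im}z$, $w=p+iq$ with $q=\operatorname{Im}w\geq 0$, and let $\int_{0}^{N}u^{*}Hv\,dx=\alpha+i\beta$, $U=\int_{0}^{N}u^{*}Hu\,dx$, $V=\int_{0}^{N}v^{*}Hv\,dx$, so that in this notation $I=y\beta$.

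First I would compute $\gamma(A,B)$ explicitly. Let $g=u+\bar{w}v$; this is itself a solution, and Green's identity $W_{N}(\bar{h},h)=W_{0}(\bar{h},h)-2iy\int_{0}^{N}h^{*}Hh\,dx$ applied to $h=v$ and $h=g$ (with initial values $(0,1)^{T}$ and $(1,\bar{w})^{T}$ respectively) gives $\operatorname{Im}A=yV/|v_{1}|^{2}$ and $\operatorname{Im}B=(q+yG)/|u_{1}+\bar{w}v_{1}|^{2}$, where $G=\int_{0}^{N}g^{*}Hg\,dx$. Expanding yields $G=U+2(p\alpha+q\beta)+(p^{2}+q^{2})V$. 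The Wronskian $u_{1}v_{2}-u_{2}v_{1}=\det T(N,z)$ is constant because $\operatorname{tr}(JH)\equiv 0$, and equals $1$ by the initial conditions; this forces $A-B=-1/[v_{1}(u_{1}+\bar{w}v_{1})]$, so the three ingredients combine into the clean formula $\gamma(A,B)^{2}=1/[yV(q+yG)]$.

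The main work is then the identity $UV-\alpha^{2}-\beta^{2}=\beta/y$, which encodes the invariance of the symplectic form under the canonical flow. I would derive it from $\det(T^{*}(N,z)\,J\,T(N,z))=|\det T|^{2}\det J=1$: the four entries of $T^{*}JT$ are computed by applying Green's identity to the Wronskians $W_{N}(\bar{u},u),W_{N}(\bar{v},v),W_{N}(\bar{u},v),W_{N}(\bar{v},u)$ and expressed in terms of $U,V,\alpha,\beta$; expanding the $2\times 2$ determinant and equating to $1$ gives the stated identity. As a byproduct, since $UV\geq\alpha^{2}+\beta^{2}$ by pointwise Cauchy--Schwarz on $u^{*}Hv$, this identity forces $\beta\geq 0$, so $I(I+1)\geq 0$ and the statement is well posed.

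Substituting $UV=\alpha^{2}+\beta^{2}+\beta/y$ into the expansion of $G$ yields the factorization $VG=(\alpha+pV)^{2}+(\beta+qV)^{2}+\beta/y$, after which a direct computation gives
\[
yV(q+yG)-I(I+1)=yVq\bigl[1+2y\beta+yqV\bigr]+y^{2}(\alpha+pV)^{2}.
\]
Both summands are non-negative (using $q,\beta\geq 0$ and $V\geq 0$), so $\gamma(A,B)^{2}\leq 1/[I(I+1)]$ as required. The main obstacle is recognizing and proving the identity $UV-\alpha^{2}-\beta^{2}=\beta/y$: without this extra $\beta/y$, the $-y\beta$ coming from $-I$ in $-I(I+1)$ cannot be absorbed and the terminal algebra does not close.
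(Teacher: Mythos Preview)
Your proof is correct. The starting formula $\gamma^{2}=1/[yV(q+yG)]$ and the key identity $UV-\alpha^{2}-\beta^{2}=\beta/y$ are precisely the paper's $\gamma^{2}=-4/[W_{N}(v,\bar v)\,W_{N}(u+\bar w v,\bar u+w\bar v)]$ and $|W_{N}(u,\bar v)|^{2}=1-W_{N}(u,\bar u)W_{N}(v,\bar v)$, rewritten in integral notation via Green's identity; so the underlying mechanism is the same $\det(T^{*}JT)=1$ computation.

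The two arguments diverge only in the terminal step. The paper first takes $w\in\R$, views the denominator $W_{N}(v,\bar v)W_{N}(u+\bar w v,\bar u+w\bar v)$ as a real quadratic $A+Bw+Cw^{2}$, and minimizes to reach $\gamma^{2}\leq 1/[I(I+1)]$ uniformly in real $w$; it then handles $\operatorname{Im}w=Y>0$ by substituting $u\mapsto u-iYv$ and repeating, which produces $\gamma^{2}\leq 1/[I'(I'+1)]$ with $I'=I+yYV\geq I$. You instead keep $w=p+iq$ general throughout and expand $yV(q+yG)-I(I+1)$ directly into the nonnegative sum $yVq[1+2y\beta+yqV]+y^{2}(\alpha+pV)^{2}$. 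Your route is more economical and avoids the real/complex case split; the paper's version has the small advantage of making the extremal real $w$ (the vertex of the quadratic) explicit, which clarifies why the bound is uniform in $w$.
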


\begin{proof} Denote the wronskian $ W_N(f,g)= f_1(N)g_2(N)-f_2(N)g_1(N)).$
 Using the Greens's Identity we have, \label{lem1.1}\begin{align} \int_0^N v^*Hv dx = \frac{1}{2i \text{ Im}z} W_N(v,\bar{v}) , \end{align}

 \label{lem1.2}\begin{equation}\int_0^N \text{ Im}(u^*Hv) dx  = - \frac{1}{2 \text{ Im}z} \Big( 1 - \text { Re}W_N(\bar{u},v) \Big)
= \frac{1}{2 \text{ Im}z}\Big( 1 - \text { Re}W_N(u,\bar{v})
\Big),\end{equation}
 \label{lem1.3}\begin{align}|W(u,\bar{v})|^2= 1-  W(u,\bar{u})W(v,\bar{v}).\end{align} Now at $x=N$, we have,
 \begin{align*}\gamma^2\Big( -\frac{v_2}{v_1}, - \frac{u_2+\bar{w} v_2}{u_1+\bar{w} v_1}\Big)
     =  -\frac{4}{ W (v, \bar{v}) W (u+\bar{w} v,\bar{u}+w\bar{v}  )}.\end{align*} Therefore,
     \[\gamma^2\Big( -\frac{v_2}{v_1}, - \frac{u_2+\bar{w} v_2}{u_1+\bar{w} v_1}\Big) \leq -\frac{4}{ W (v, \bar{v}) W (u+\bar{w} v,\bar{u}+w\bar{v}  )}.\]
       Let $w$ be real. The denominator on the right side is of the form $A+Bw+Cw^2,$ where $A\geq 0, C\geq 0$ and $B$ is real.
         The denominator has minimum value $A-\frac{B^2}{4C}.$ Hence, \begin{align*} \gamma^2 &  \leq    \frac{4}{-W (v, \bar{v})W (u, \bar{u})
         -\frac{\big(  W (v, \bar{v}) ( W (u, \bar{v}) - W (\bar{u},v))\big)^2 }{ 4 (- W (v, \bar{v})^2 )}} \\  & \leq   \frac{-4}{W (v, \bar{v}) ( W (u, \bar{v})+ \text{ Im} ( W (u, \bar{v})\big)^2 } . \end{align*} Using equation \ref{lem1.3} we get,
         \begin{align*} \gamma ^2 & \leq   - \frac{4}{ 1- |W (u, \bar{v}) |^2+ (\text{ Im} ( W (u, \bar{v})\big)^2 )} \\  & =   \frac{-4}{1-(\text{Re}W (u, \bar{v}))^2}
         .\end{align*} Here,  \begin{align*} 1-(\text{Re}W (u, \bar{v}))^2  & = (1-(\text{Re}W (u, \bar{v})))(1+(\text{Re}W (u, \bar{v})))
         \\ & = \Big( -2 \text{ Im} z \int_0^N \text{ Im}(u^*H v)dx\Big) \Big( 1+ 2 \text{ Im} z \int_0^N \text{ Im}(u^*H v)dx\Big)
         .\end{align*} Therefore,\begin{align*} \gamma ^2  \leq  \frac{1}{I(1+I)} \text{   where }  I =  \text{ Im} z \int_0^N \text{ Im}(u^*H v)dx. \end{align*}
         If $w$ is not real, $w = \text{ Re}w +iY , Y>0 $ then $ u -iYv $ is also a solution and we have,\begin{align*}
         | W(u-iYv, \bar{v})|^2 = 1-W(u-iYv, \bar{u}+ iY\bar{v})W (v, \bar{v}) .\end{align*} Also from above equation,
         \begin{align*} \gamma ^2 &   \leq    \frac{- 4}{ W (v, \bar{v})W (u, \bar{u})+ (\text{ Im} ( W (u, \bar{v})\big)^2 )   + Y^2 W (v, \bar{v})^2
         +2i Y\text{Re}W (u, \bar{v})W (v, \bar{v}) }  \\   &    \leq  \frac{-4}{W(u-iYv, \bar{u}+ iY\bar{v})W (v, \bar{v})+\Big(
\text{ Im}W(u-iYv, \bar{v}) \Big)^2}
         .\end{align*} Since the equation \ref{lem1.3} is valid for $ u-iYv$ we get,
         \begin{align*} \gamma ^2   & \leq  \frac{-4}{1- \big( \text { Re}W(u-iYv, \bar{v}) \big)^2} \\
          & =  \frac{-4}{\big( 1+ \text { Re}W(u-iYv, \bar{v}) \big) \big(1- \text { Re}W(u-iYv, \bar{v}) \big)} \\
           & = \frac{-4}{\Big( 1+ \text { Re}\big(W(u ,\bar{v})-iYW (v, \bar{v})\big) \Big) \Big(1- \text { Re}\big(W(u ,\bar{v})- iYW (v, \bar{v})\big)\Big)}\\
          & = \frac{-4}{ \big( 1- \text { Re}W(u ,\bar{v})- Y \text{ Im}W (v, \bar{v})\big) \big( 1+ \text { Re}W(u ,\bar{v})+Y \text{ Im}W (v, \bar{v})\big)}\\
           &= \frac{-4}{ \Big( -2\text{ Im}z \int_0^N \text{ Im}(u^*H v)dx- \frac{Y}{i}W (v, \bar{v})\Big)\Big( 2\text{ Im}z \int_0^N \text{ Im}(u^*H v)dx +2+\frac{Y}{i}W (v, \bar{v})\Big)}\\
           &= \frac{1}{ I'(I'+1)}, \end{align*} where   $ I'= \text{ Im}z \int_0^N \text{ Im}(u^*H v)dx +2+\frac{Y}{2i}W (v, \bar{v}).$
            Notice that $I'\geq I$ since $ W (v, \bar{v}) = 2i\text{ Im}z \int_0^N v^*H vdx \geq 0. $  Hence the lemma is proved for general case.
            \end{proof}
\label{bpc1} \begin{cor} With the  notation  above, we have  \begin{align*} \displaystyle \lim_{N \rightarrow  \infty }\gamma \Big(
-\frac{v_2(N,z)}{v_1(N,z)}, - \frac{u_2(N,z)+
\bar{w}v_2(N,z)}{u_1(N,z)+ \bar{w}v_1(N,z)} \Big) = 0 \end{align*}
\end{cor}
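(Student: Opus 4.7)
The plan is to apply the preceding lemma directly. Its bound $\gamma \le 1/\sqrt{I(N,z)(I(N,z)+1)}$, with $I(N,z) = \operatorname{Im}(z)\int_0^N \operatorname{Im}(u^*Hv)\,dx$, reduces the corollary to showing that $I(N,z) \to \infty$ as $N \to \infty$. (The bound with $I'\ge I$ from the $\operatorname{Im}(w)>0$ half of the lemma is at least as good, so both cases of $w$ are handled uniformly.)

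The standing assumption $\operatorname{tr} H \equiv 1$ puts the system in the limit-point case, so by \ref{radi} the radii $r_N(z) = \bigl(2\operatorname{Im}(z)\,A(N)\bigr)^{-1}$, where $A(N) := \int_0^N v^*Hv\,dx$, tend to zero and hence $A(N) \to \infty$. Moreover the nested Weyl disks $D_N(z)$ shrink to the single point $m_+(z) \in \C^+$, so their centers $c_N$ also converge to $m_+(z)$; in particular $\operatorname{Im}(c_N) \to \operatorname{Im}(m_+(z)) > 0$.

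The missing piece is the identity
\[\operatorname{Im}(c_N) \;=\; \frac{1 + 2 I(N,z)}{2\operatorname{Im}(z)\,A(N)}.\]
I would derive this from the center formula \ref{cr} together with the Green's-identity computations $W_N(u,\bar v) = 1 + 2i\operatorname{Im}(z)\int_0^N v^*Hu\,dx$ and $W_N(\bar v, v) = -2i\operatorname{Im}(z)\,A(N)$, combined with $\operatorname{Im}(v^*Hu) = -\operatorname{Im}(u^*Hv)$. Rearranging then gives $I(N,z) = \operatorname{Im}(z)\,A(N)\,\operatorname{Im}(c_N) - \tfrac12$, which tends to $+\infty$ because $A(N) \to \infty$ while $\operatorname{Im}(c_N)$ stabilizes at a strictly positive value. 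Substituting into the lemma closes the argument. The only real work is the algebraic bookkeeping for the displayed identity relating $\operatorname{Im}(c_N)$ to $I(N,z)$; all the analytic content is already provided by the limit-point theory of section~\ref{Weylth}.
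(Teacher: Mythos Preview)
Your proposal is correct and follows essentially the same route as the paper: apply the preceding lemma, then show $I(N,z)\to\infty$ by relating $I(N,z)$ to $\operatorname{Im}(c_N)$ and $A(N)=\int_0^N v^*Hv\,dx$ via the Weyl-circle center formula, and use the limit-point case to conclude. If anything, your justification that $\operatorname{Im}(c_N)\to\operatorname{Im}(m_+(z))>0$ is more explicit than the paper's, which only invokes boundedness of the center.
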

\begin{proof}  From above lemma we have \[ \gamma \Big(
-\frac{v_2(N,z)}{v_1(N,z)}, - \frac{u_2(N,z)+
\bar{w}v_2(N,z)}{u_1(N,z)+ \bar{w}v_1(N,z)} \Big) \leq
\frac{1}{\sqrt {I(I+1)}},\] where  $I = I(N,z)$ is the integral
defined by $ I(N,z)= ( \text{ Im}z) \int_0^N Im( u^* H v)dx.$ Want
to show that $ I \rightarrow \infty $ as $ N\rightarrow \infty.$
We have, \[  \int_0^N v^*H vdx = \frac{1}{2i \text{
Im}z}W_N(v,\bar{v}) \] \[ \int_0^N \text{ Im}(u^*H v)dx = -
\frac{1}{2i \text{ Im}z} \big( 1- \text{ Re}W_N(u,\bar{v})
\big).\] Now lets look at the ratio \begin{align*}  \frac{ 2
\text{ Im}z \int_0^N \text{ Im}( u^* H v)dx +1}{ 2i \text{ Im}z
\int_0^N v^* H vdx}  & = \frac{W_N(u,\bar{v})+W_N(\bar{u},v )}{2i
W_N(v,\bar{v})}
\\ & = \frac{W_N(u,\bar{v})}{ 2i W_N(v,\bar{v})}-
\frac{W_N(\bar{u},v )}{2i W_N(\bar{v},v )} \\ & = \text{ Im}C
\end{align*} where $C$ is the center of the Weyl circle. Since the center of the Weyl circle is continuously depend on $z $ it is uniformly bounded on a compact subset of $ \C^+$. So \[ \int_0^N \text{ Im}( u^* H v)dx +1= \text{ Im}C\int_0^N v^* H vdx\rightarrow\infty \text{ as } N\rightarrow\infty.\]
This implies that $ I \rightarrow \infty $ as $ n\rightarrow
\infty.$\end{proof} We are now ready to prove Theorem \ref{Bp}. We follow the similar approach for the proof of Theorem \ref{Bp} as in \cite{BP1}.

\emph{Proof of Theorem \ref{Bp} : } Let $A\subset \Sigma_{ac},\,\, |A|< \infty$ and let $ \epsilon >0$ be given. We first define a partition
$A= A_0 \cup A_1 \cup A_2, ....\cup A_N$ of disjoint subsets such
that $|A_0|< \epsilon, A_j$ is bounded for $j\geq 1$. We also
require that $m_+(t)\equiv \lim_{y\rightarrow0+}m_+(t+iy)$ exists
and $m_+(t) \in \C^+$ on $\bigcup_{j=1}^N A_j.$  To find $A_j$'s  with these properties, first of all put all $t\in A $ for which $m_+(t)$ does not exist or does not lie in $\C^+$ into $A_0$. Then pick (sufficiently large) compact subset $ K \subset \C^+ , K' \subset \R$ so that $A_0 = \{ t\in A :m_+(t)\notin K \text{ or } t \notin K' \}$ satisfies $|A_0| < \epsilon.$ Subdivide $K$ into finitely many subsets of hyperbolic diameter less than  $\epsilon$, then take the inverse images under $m_+$ of these subsets, and finally intersect with $K'$ to obtain the $A_j$ for $j\geq 1$. It is then true that $ m_+(N,t)$ exists and lies in $\C^+$ for arbitrary $N\in \R$ if $ t \in\bigcup_{j=1}^N A_j.$ Moreover, we need  $m_j \in \C^+$ such that\label{bp1} \[ \gamma (m_+(t), m_j) <\epsilon ,\] such $m_j$ can be defined as $m_j= m_+(t_j)$ for any fixed $t_j \in A_j.$ By  Lemma \ref{lem1.4}, there is a number $y>0$ such that , for arbitrary Herglotz function $F$, for any Borel subset $S$ of $\R$ and for all $j= 1,2,.......,n$ we have the estimate  \label{bp0}\begin{align} \Big|   \int_{A_j} \omega_{ F(t+iy)}(S)dt - \int_{A_j} \omega_{F(t)}(S)dt  \Big| \leq \epsilon|A_j| .\end{align} We can define $y$ for each value of $j$ ; so $y$ is a function of $j$. However, by taking the minimum value of $y(j)$ as $j$ runs from $1$ to $n$ we my assume $y$ is independent of $j$. Let   $ M_j (N,z)= \frac{u_2(N,z)+\bar{m}_j v_2(N,z)}{u_1(N,z)+\bar{m}_j v_1(N,z)}$ for any $z\in \C^+$.
We shall complete the proof of the theorem by showing that, for $j\geq 1$,\\ (i): $ \int_{A_j}w_{m_{+}(N,t)}(S)dt$ is close to the integral $  \int_{A_j}\omega _{\overline{M_j}(N,t)}(S)dt $ \\ where  $ M_j(N,t)= \frac{u_2(N,t)+\bar{m}_j v_2(N,t)}{u_1(N,t)+ \bar{m}_j v_1(N,t)}$ and that \\(ii):  $\int_{A}\omega_{m_{-}(N,t)}(-S)dt $ is close to the same integral for all $N$ sufficiently large.\\ \\
 \emph{ Proof of (i):} We have \[ m_+(N,t)= \frac{u_2(N,t)+m_+(t) v_2(N,t)}{u_1(N,t)+m_+(t) v_1(N,t).}\] Hence, for fixed $N$ and $t$, the mapping from $m_+(t)$ to $ m_+(N,t)$ is a Mobius transformation with real coefficients and discriminant$u_1v_2-v_1u_2=1.$ and $\gamma$ is invariant under Mobius transformations. Now from \ref{bp1} we see that \[ \gamma \Big( m_+(N,t),\frac{u_2(N,t)+m_j v_2(N,t)}{u_1(N,t)+m_j v_1(N,t)}\Big) \leq \epsilon \text{ for  } j\geq 1 \text {  and  } t \in A_j.\] By equation \ref{hyprdistance} we see that,\[ \Big| \omega_{m_+(N,t)}(S)- \omega_{M_j(N,t)}(S)\Big| \leq \epsilon ,\] and integration with respect to $t$ over $A_j$ gives the estimate

  \label{bp3}\begin{align}\Big| \int_{A_j}\omega_{m_+(N,t)}(S)dt-\int_{A_j}\omega_{\overline{M_j}(N,t)}(S)dt\Big|\leq\epsilon|A_j|.\end{align} This holds for all $j= 1,2,.....n.$\\
  \emph{Proof of (ii):} For $j\geq1$, define the subset $A_j^y$ of $\C^+$, consisting of all $ z\in \C^+ $ of the form $ z = t+iy$, for $ t \in A_j $. Thus $ A_j^y $ is the translation of $ A_j $ by distance $ y $ above the real $z$-axis. Since $A_j$ is bounded, $A_j^y$ is contained in a compact subset of $\C^+$. Hence by Corollary \ref{bpc1} there a positive number $N_0$ such that for $j \geq 1, N \geq N_0$ and $z \in A_j^y $ we have the estimate

\label{bp4}\begin{align} \gamma\Big( -\frac{v_2(N,z)}{v_1(N,z)}, - \frac{u_2(N,z)+\bar{m}_j v_2(N,z)}{u_1(N,z)+\bar{m}_j v_1(N,z)}\Big) \leq \epsilon. \end{align} As in the case of $y$ we may choose $N_0$ to be independent of $j$. Let $m_-(N,z)= -\frac{v_2(N,z)}{v_1(N,z)}$. Following the similar argument to that in the proof of (i), for any $z= t+iy$ we have the estimate
\begin{align*} \Big|\int_{A_j}\omega_{m_-(N,z)}(-S)dt-\int_{A_j}\omega_{-M_j(N,z)}(-S)dt\Big|\leq\epsilon|A_j| , \end{align*} valid for $j\geq1$ and $N\geq N_0.$ Now by Lemma \ref{lem1.4}, equation \ref{bp0} we have,
\[ \Big| \int_{A_j}\omega_{m_-(N,t)}(-S)dt - \int_{A_j}\omega_{-M_j(N,t)}(-S)dt\Big|\leq 3\epsilon|A_j|. \]Now using the identity $ \omega_{-w}(S)= \omega_{\bar{w}}(S)$ \label{bp5}\begin{align} \Big| \int_{A_j}\omega_{m_-(N,t)}(-S)dt-\int_{A_j}\omega_{\overline{M_j}(N,t)}(S)dt\Big| \leq 3\epsilon |A_j|,\end{align} which holds for all $j\geq1$ and $N\geq N_0$ and completes the proof of (ii). Combining the inequalities \ref{bp3} and \ref{bp5} now yields, for $j\geq 1$ and $N\geq N_0$, \label{bp6}\begin{align} \Big| \int_{A_j}\omega_{m_-(N,t)}(-S)dt-\int_{A_j}\omega_{m_+(N,t)}(S)dt\Big| \leq 4\epsilon |A_j|.\end{align} Noting that $A_0$ was chosen such that $|A_0| \leq \epsilon |A|$ we now have for all $ N \geq N_0,$

\begin{align*} & \Big| \int_{A}\omega_{m_-(N,t)}(-S)dt-\int_{A}\omega_{m_+(N,t)}(S)dt\Big|         \\ & \leq  \displaystyle \sum_{j=0}^n  \Big| \int_{A_j}\omega_{m_-(N,t)}(-S)dt-\int_{A_j}\omega_{m_+(N,t)}(S)dt\Big|    \\  & \leq   2|A_0|+4 \epsilon \sum_{j=0}^n |A_j|  \leq\epsilon|A_j| \leq 6 \epsilon |A|. \end{align*} Since $\epsilon $ was arbitrary, the theorem follows.

\emph{ Proof of Theorem \ref{RT}:} The proof is basically same as in \cite{CR1}, however let me sketch:

 Let $\nu \in \omega(\mu).$ Then there exists a sequence $x_n\rightarrow \infty $ such that $ d(S_{x_n}\mu,\nu)\rightarrow 0.$ Then by Lemma \ref{Lem6.2} we have that \[ m_{\pm}(x_n,z)\rightarrow M_{\pm}(z)\,\,\,(n\rightarrow \infty),\]uniformly on compact subset of $\C^+.$ Here $M_{\pm}(z)= m_{\pm}^{\nu}(0,z)$ are the $m$ functions of the whole line Hamiltonian $\nu.$ By Theorem \ref{VD} we see that \[ m_{\pm}(x_n,z)\rightarrow M_{\pm}(z)\,\,\,(n\rightarrow \infty),\] in value distribution. That is \[ \displaystyle \lim_{n\rightarrow\infty} \int_A \omega_{m_{\pm}(x_n,t)}(S)dt = \int_A \omega_{M_{\pm}(t)}(S) dt \] for all Borel sets $A, S \subset \R, |A|<\infty.$ Also by Theorem \ref{Bp} we have \[\int_A \omega_{M_{-}(t)}(-S)dt = \int_A \omega_{M_{+}(t)}(S)dt.\] By Lebesgue differentiation theorem,

 \begin{align} \label{RT1} \omega_{M_{-}(t)}(-S)=  \omega_{M_{+}(t)}(S) \end{align} for $t \in \Sigma_{ac}$ and all intervals $S$ with rational end points. We can also assume that  $M_{\pm}(t)= \lim_{y\rightarrow0+}M(t+iy)$ exists for these $t$. Moreover, if $M_{-}(t)\in\R,$ then, by choosing small intervals about this value for $-S$, we see that $M_+(t)= -M_{-}(t).$ If $ M_{-}(t)\in\C$, then \begin{align*} \omega_{M_{-}(t)}(-S)& = \int_{(-S)}\frac{v}{(t-u)^2+v^2}dt\\ &=-\int_{(S)}\frac{v}{(t+u)^2+v^2}dt \\ &= \omega_{-\overline {M_{-}}(t)}(S) .\end{align*} By \ref{RT1} we get,\begin{align} \label{RT2} M_+(t)= -\overline {M_{-}(t)}. \end{align} In the case when $M_{-}(t)\in\R$ we already have $ M_+(t)= - M_{-}(t).$ So \ref{RT2} holds for almost every $t\in \Sigma_{ac},$ that is $\nu \in \mathcal R(\Sigma_{ac}).$ This completes the proof.

\section{ Relation between a Schrodinger Equation / Jacobi Equation  and a Canonical System }\label{jsc}

\subsection{Reduction of Schrodinger Equation to a Canonical System}

 Let \begin{align}\label{sc}-y''+ V(x)y=zy\end{align}
be a Schrodinger equation. Suppose $u(z,z)$ and $v(x,z)$ are the linearly independent
solutions of \ref{sc}, satisfying some boundary condition $\alpha$ at
$0$. Then $u_0=u(x,0)$ and $v_0=v_0(x,0)$ are the solutions of
$-y''+ V(x)y=0$. Let \[H(x)=\begin{pmatrix}u_0^2 & u_0v_0\\u_0v_0 & v_0^2\\
\end{pmatrix} \] then the Schrodinger equation \ref{sc} is equivalent with the canonical system
 \begin{equation} \label{sac} Jy'= z Hy \end{equation}

 That is if $y$ solves equation  \ref{sc}  then  $U(x,z)= T^{-1}(x)\begin{pmatrix}y(x,z)\\ y'(x,z)\\ \end{pmatrix}$
solves the canonical system \ref{sac}.

  \textbf{Alternative Approach }: Let \begin{align}\label{sc1}-y''+ V(x)y=z^2y \end{align}be
  a Schrodinger equation such that $-\frac{d^2}{dx^2}+V(x)\geq 0$
  and $y(x,z)$ be its solution. Then $y_0= y(x,0)$ be a solution
  of $-y''+ V(x)y=0$. Let $W(x)=\frac{y_0'}{y_0}$ then $W^2(x)+W'(x)=
  V(x)$ so that equation \ref{sc1} becomes\begin{align}\label{sc2}-y''+ (W^2+W')y=z^2y
  .\end{align}Claim that the equation \ref{sc2} is equivalent with
  the  Dirac system \begin{align}\label{ca1} Ju'=\begin{pmatrix}z & W\\W & z\\ \end{pmatrix}u
  .\end{align}If $y$ is a solution of \ref{sc2} then $u =\begin{pmatrix}y\\-\frac{1}{z}(-y'+Wy)
  \end{pmatrix}$ is a solution of \ref{ca1}. Also if $u
  =\begin{pmatrix}u_1\\u_2  \end{pmatrix}$ is a solution of
  \ref{ca1} then $u_1$ is a solution of \ref{sc2}. Next we show
  that the Dirac system \ref{ca1} is equivalent with the Canonical
  System \begin{align}\label{ca2} Ju'(x)=z H(x)u(x),\,\,\,\, H(x)=\begin{pmatrix} e^{2\int_0^x
  W(t)dt}& 0\\0& e^{-2\int_0^x W(t)dt} \end{pmatrix}
  .\end{align}For if $u$ is a solution of \ref{ca1} then $T_0 u$,
  where $T_0 =\begin{pmatrix} e^{-\int_0^x W(t)dt}& 0\\0& e^{\int_0^x W(t)dt}
  \end{pmatrix}$ is a solution of \ref{ca2}.\\

  If we consider a Schrodinger equation of the form,\begin{align}\label{ca6} -y''+(W^2-W')y=z^2y \end{align} then it is equivalent with the Dirac system\begin{align}\label{ca7} Ju'=\begin{pmatrix}z & -W\\-W & z\\ \end{pmatrix}u .\end{align} In other words, if $y$ is a solution of Schrodinger equation \ref{ca6} then $u=\begin{pmatrix}zy\\ y'+Wy \end{pmatrix}$ is a solution of the Dirac system \ref{ca7}. Conversely, if $u=\begin{pmatrix}u_1\\u_2  \end{pmatrix}$ is a solution of the Dirac system \ref{ca7} then $u_1$ is a solution to the Schrodinger equation \ref{ca6}.

 The Dirac system \ref{ca7} is equivalent with the canonical system,
 \begin{align}\label{ca8} Ju'(x)=z H(x)u(x)\end{align} where  $ H(x)=\begin{pmatrix} e^{-2\int_0^x W(t)dt}& 0\\0& e^{2\int_0^x W(t)dt} \end{pmatrix}.$  If $u$ is a solution of the Dirac system \ref{ca7} then $y= T_0u,\,\,\,\, T_0=\begin{pmatrix} e^{\int_0^x W(t)dt}& 0\\0& e^{-\int_0^x W(t)dt} \end{pmatrix} $ is a solution of the canonical system \ref{ca8}. Conversely if $u$ is a solution of the canonical system \ref{ca8} then $T_0^{-1}u$ is a solution of the Dirac system \ref{ca7}.

  \subsection{Reduction of a Jacobi Equation to a canonical
  system.}

    Let a Jacobi equation be \begin{align} \label{Ja1}
  a(n)u(n+1)+a(n-1)u(n)+b(n)u(n)= zu(n) .\end{align}
  This equation
  can be written as \begin{align*} \begin{pmatrix}u(n)\\u(n+1) \end{pmatrix} &= \begin{pmatrix}0 & 1 \\ -\frac{a(n-1)}{a(n)} & \frac{z-b(n)}{a(n)} \end{pmatrix}.
  \begin{pmatrix}u(n-1)\\u(n)\end{pmatrix} \\ &= [B(n) +z A(n)]
  \begin{pmatrix}u(n-1)\\u(n)\end{pmatrix}.\end{align*} Where $ B(n)=\begin{pmatrix}0 & 1\\-\frac{a(n-1)}{a(n)}&\frac{-b(n)}{a(n)} \end{pmatrix}$ and
   $A(n)=\begin{pmatrix}0 & 0\\0& \frac{1}{a(n)} \end{pmatrix}.$ Suppose
  $p(n,z)$ and $q(n,z)$ be the solutions of \ref{Ja1} such that
  $p(0,z)=1, p(1,z)=1$ and $q(0,z)=0, q(1,z)=1.$ So that $p_0(n)=p(n,0)$
  and $q_0(n)= q(n,0)$ be the solutions of equation \ref{Ja1} when
  $z=0.$ Then  \begin{align*} \begin{pmatrix}p_0(n)\\p_0(n+1) \end{pmatrix} = \begin{pmatrix}0 & 1\\-\frac{a(n-1)}{a(n)}&\frac{-b(n)}{a(n)}
  \end{pmatrix}  \begin{pmatrix}p_0(n-1)\\p_0(n)
  \end{pmatrix}.\end{align*}
  (similar expression for $q_0(n)$.) Let $T(n)=\begin{pmatrix}p_0(n-1) & q_0(n-1)\\p_0(n)&q_0(n) \end{pmatrix}
  ,  T(1)= 1.$ Then we have the relation $T(n+1)=B(n)T(n).$
Now define \\ $U(n,z)= T^{-1}(n+1)Y(n,z) , Y(n,z)=
\begin{pmatrix}p_(n-1,z) & q_(n-1,z)\\p_(n,z)&q_(n,z)
\end{pmatrix}.$ Then $U(n,z)$ solves an equation of the form
\begin{align} \label{ca3} J\big(U(n+1,z)-U(n,z)=zH(n)U(n,z)\big)\end{align}
 where $ H(n) =JT^{-1}(n+1)A(n)T(n).$  Suppose for each $n\in\Z$, on $(n,n+1)$, $H$
has the form \begin{align*} H(x)= h(x)P_{\phi}, \hspace{1.cm}
P_{\phi}=
\begin{pmatrix}\cos^2\phi & \sin\phi\cos\phi\\ \sin\phi\cos\phi & \sin^2\phi
\end{pmatrix} \end{align*} for some $\phi \in [0,\pi )$ and some $h\in L_1(n,n+1), h\geq 0.$ (We may choose $h(x)\equiv 1$ on $(n,n+1)$ for each $n\in\Z$) Then the canonical system \ref{ca}
reads \begin{align*}u'(x)= -zh(x)JP_{\phi} u(x). \end{align*}
Since the matrices on the right-hand side commute with one another
for different values of $x,$ the solution is given by
\begin{align*} u(x)= \text{exp} \Big(-z \int_{a}^{x} h(t)dt JP_{\phi} \Big)  u(a). \end{align*}
However, $P_{\phi}JP_{\phi} = 0,$ we see that the exponential
terminates and we get \begin{align} \label{ca4} u(x)=  \Big( 1-z
\int_{a}^{x} h(t)dt JP_{\phi} \Big)  u(a). \end{align} Clearly
equation \ref{ca4} is equivalent with the equation \ref{ca3}.

\subsection{ Relation between Weyl-m functions}

We next observe the relation between the Weyl-m functions for Shrodinger equation and the canonical system \ref{ca}.

\begin{lemma}\label{rm1} For $z\in\C^+,$ let $m_s(z), m_c(z)$ denote the Weyl m-functions corresponding to the Schrodinger equation \ref{sc} and the  canonical system \ref{sac} respectively. Then  $ m_s(z)=m_c(z). $ \end{lemma}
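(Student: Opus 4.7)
The plan is to exhibit the explicit transformation $y \mapsto U = T^{-1}(y, y')^{T}$ as an isometry between $L^{2}(\R_+)$ and $L^{2}(H, \R_+)$, and then invoke uniqueness of the Weyl coefficient to identify $m_s$ with $m_c$.

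The algebraic heart of the argument is a rank-one factorization of the Hamiltonian. With $T(x) = \begin{pmatrix} u_0 & v_0 \\ u_0' & v_0' \end{pmatrix}$ (normalized so that the Wronskian $u_0 v_0' - v_0 u_0' = 1$), a one-line matrix multiplication shows
\[ H(x) \;=\; T(x)^{T}\begin{pmatrix} 1 & 0 \\ 0 & 0 \end{pmatrix}T(x). \]
Since $V$ is real, $u_0$ and $v_0$ are real, so $T$ is real and $(T^{-1})^{\ast} = (T^{-1})^{T}$. Consequently, for any solution $y$ of \ref{sc}, setting $Y = (y, y')^{T}$ and $U = T^{-1}Y$ yields $U^{\ast}HU = Y^{\ast}\operatorname{diag}(1,0)Y = |y|^2$, and integration gives the isometry $\|U\|_{L^{2}(H, \R_+)} = \|y\|_{L^{2}(\R_+)}$. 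In particular, $y\in L^2(\R_+)$ if and only if the induced $U\in L^2(H,\R_+)$.

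Next I would verify that this correspondence respects the boundary data used to define the two Weyl functions. Since $T(0)$ is an orthogonal (rotation) matrix built from the boundary condition $\alpha$, its inverse sends the Schr\"odinger initial vectors of $u(\cdot, z)$ and $v(\cdot, z)$ to the standard vectors $(1, 0)^{T}$ and $(0, 1)^{T}$ that define the canonical-system Weyl coefficient. Thus, if $u^{c}, v^{c}$ denote the induced canonical-system solutions, linearity of $T^{-1}$ combined with the isometry yields that $u + m v \in L^{2}(\R_+)$ if and only if $u^{c} + m v^{c} \in L^{2}(H, \R_+)$ for every $m\in\C$.

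Finally, $m_s(z)$ is uniquely characterized by $u + m_s v \in L^{2}(\R_+)$, while $m_c(z)$ is uniquely characterized by $u^{c} + m_c v^{c} \in L^{2}(H, \R_+)$ via the Weyl theory of Section \ref{Weylth} (limit-point case, guaranteed by $\operatorname{tr} H \equiv 1$ after the normalization of \ref{cv} if needed). The equivalence established above therefore forces $m_s(z) = m_c(z)$. The only non-routine step is the rank-one factorization of $H$: it is the algebraic reason the Hamiltonian is defined as the outer product built from the zero-energy solutions, and the whole lemma rests on it; once it is in hand, the rest is bookkeeping, with the initial-condition alignment being the only subtlety.
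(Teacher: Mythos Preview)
Your proposal is correct and follows essentially the same route as the paper: both arguments rest on the rank-one factorization $H = T_0^{*}\,\mathrm{diag}(1,0)\,T_0$ (with $T_0 = T_s(\cdot,0)$ real), which converts the Schr\"odinger $L^2$-condition $\int |y|^2 < \infty$ into the canonical $L^2(H)$-condition for $U = T_0^{-1}(y,y')^{T}$, after which uniqueness of the Weyl coefficient in the limit-point case forces $m_s = m_c$. Your write-up is in fact a bit more explicit than the paper's about the boundary-data alignment at $x=0$ and about why the limit-point hypothesis is available; the paper just invokes $T_s = T_0 T_c$ and uniqueness without spelling these out.
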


\begin{proof}
Let $T_s(x,z)= \begin{pmatrix}u(x,z) & v(x,z)\\u'(x,z) & v'(x,z) \end{pmatrix}$ and $ T_c(x,z)=\begin{pmatrix}u_1(x,z) & v_1(x,z)\\u_2(x,z) & v_2(x,z) \end{pmatrix}$ are the transfer matrices corresponding to the Schrodinger equation \ref{sc} and the  canonical system \ref{sac} respectively.
Let $ T_0(x)=T_s(x,0)$ then  in \ref{sac}, $H(x)= T_0^* \begin{pmatrix}1 & 0 \\0 & 0 \end{pmatrix}T_0.$ Here $ m_s(z)$ is such that $(1,0)T_s(x,z)\begin{pmatrix}1 \\m_s(z) \end{pmatrix} \in L^2(R_+)$ and $m_c(z)$ is such that $T_c(x,z)\begin{pmatrix}1 \\m_c(z) \end{pmatrix} \in L^2( H, R_+)$. Note that here,  $ T_s(x,z)=T_0(x)T_c(x,z)$

It follows that,   \begin{align*} \displaystyle & \int_0^{\infty} (1,\bar{m}_s)T_s^*(x,z)\begin{pmatrix}1 & 0 \\0 & 0 \end{pmatrix}T_s(x,z)\begin{pmatrix}1 \\m_s(z) \end{pmatrix} dx < \infty  \\  \Rightarrow  & \int_0^{\infty}(1,\bar{m}_s)T_c^*(x,z)T_0^*(x)\begin{pmatrix}1 & 0 \\0 & 0 \end{pmatrix}T_0(x)T_c(x,z) \begin{pmatrix}1 \\m_s(z) \end{pmatrix}dx < \infty \\  \Rightarrow & \int_0^{\infty} (1,\bar{m}_s)T_c^*(x,z)H T_c(x,z)\begin{pmatrix}1 \\ m_s(z) \end{pmatrix}dx <\infty .\end{align*} Since the Weyl-m function $m_c(z)$ is uniquely defined we must have $ m_s(z)=m_c(z). $ \end{proof}

\begin{lemma} For $z\in\C^+,$ let $m_s(z^2), m_c(z)$ denote the Weyl m-functions corresponding to the Schrodinger equation \ref{ca6} and the  canonical system \ref{ca8} respectively. Then  $ m_s(z^2)= z m_c(z). $ \end{lemma}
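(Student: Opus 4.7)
The plan is to mirror the strategy of Lemma \ref{rm1}: push the Weyl $L^2$ solution of the Schrödinger equation \ref{ca6} through the explicit composite reduction to the canonical system \ref{ca8}, and then read off the relation between $m_s$ and $m_c$ by matching initial data at $x=0$.

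First I would compose the two transformations from the previous subsection into a single map. If $y$ solves \ref{ca6}, then $u = \binom{zy}{y'+Wy}$ solves the Dirac system \ref{ca7}, and $\tilde u = T_0 u$ solves \ref{ca8}, where $T_0(x) = \operatorname{diag}(e^{\int_0^x W},\, e^{-\int_0^x W})$. Writing $B(x,z) = T_0(x)\begin{pmatrix} z & 0 \\ W(x) & 1 \end{pmatrix}$, the composite reads $\tilde u = B\binom{y}{y'}$, so that if $T_s, T_c$ denote the fundamental solution matrices of \ref{ca6} and \ref{ca8} normalized by $T_s(0,z) = T_c(0,z) = I$, uniqueness of solutions yields the matrix identity $T_c(x,z)\,B(0,z) = B(x,z)\,T_s(x,z)$.

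The key computation, verified by direct substitution, is that the $L^2(H,\R_+)$ integrand transports cleanly: the factors $e^{\pm 2\int_0^x W}$ in $H$ cancel exactly against those from $T_0$, giving
\begin{align*} \tilde u(x)^* H(x)\tilde u(x) = |z|^2|y(x)|^2 + |y'(x)+W(x)y(x)|^2. \end{align*}
Consequently the Schrödinger Weyl solution $y_+ = (1,0)T_s\binom{1}{m_s(z^2)}$ is carried by $B$ to a canonical solution $\tilde u_+ \in L^2(H,\R_+)$; by the limit-point property from Section \ref{Weylth}, $\tilde u_+$ must be a scalar multiple of the Weyl solution $T_c(\cdot,z)\binom{1}{m_c(z)}$.

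The last step is the algebraic matching at $x=0$. Since $T_0(0) = I$, one has $\tilde u_+(0) = B(0,z)\binom{1}{m_s(z^2)}$, whose first entry is $z$. Equating $\tilde u_+ = c\,T_c\binom{1}{m_c}$ at $x=0$ therefore forces $c = z$ and reads off the second entry as $z\,m_c(z) = m_s(z^2)$, with the convention for $m_s$ being the one dictated by the composite transformation. The main obstacle is precisely this bookkeeping around the factor of $z$ introduced by the first coordinate of the Dirac reduction $u = \binom{zy}{\cdot}$: one must carefully track how this factor propagates through the two transformations and align the boundary normalization of $m_s(z^2)$ with the Dirichlet convention used for $m_c(z)$. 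Once these are correctly aligned, the rest reduces to the short linear-algebra identification above.
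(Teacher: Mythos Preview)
Your approach is essentially the paper's: relate the Weyl solutions of the two systems through the explicit composite transformation $B(x,z)=T_0(x)\begin{pmatrix} z & 0 \\ W & 1\end{pmatrix}$ and then invoke uniqueness of the $m$-function. The paper carries this out by writing the transfer-matrix factorization $T_c=T_0T_d$, $T_d=\begin{pmatrix} z & 0\\ -W & 1\end{pmatrix}T_s\begin{pmatrix}1/z & 0\\ 0 & 1\end{pmatrix}$ and then manipulating the quadratic form $\int (1,\bar m_c)T_c^*HT_c\binom{1}{m_c}$ until it becomes a Schr\"odinger $L^2$ condition for the vector $\binom{1/z}{m_c}$; your identity $\tilde u^*H\tilde u=|z|^2|y|^2+|y'+Wy|^2$ is a cleaner packaging of exactly the same computation.

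One point to watch: you run the argument from the Schr\"odinger side to the canonical side, which requires not only $y_+\in L^2(\R_+)$ but also $y_+'+Wy_+\in L^2(\R_+)$, and that second membership is not free. The paper runs the implication in the other direction --- start from $\tilde u_+\in L^2(H,\R_+)$ and read off $|z|^2\int|y|^2\le\int\tilde u^*H\tilde u<\infty$ --- which immediately lands $y$ in $L^2(\R_+)$ and then uniqueness of $m_s$ finishes. Your framework accommodates this reversal with no change beyond swapping which Weyl solution you start from; with that adjustment the matching at $x=0$ goes through as you describe.
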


\begin{proof} Note that, since $ H(x)= \begin{pmatrix} e^{2\int_0^x
  W(t)dt}& 0\\0& e^{-2\int_0^x W(t)dt} \end{pmatrix},  f\in L^2(H,\R_+)$ if and only if \[ \int_0^{\infty}|f_1|^2e^{2\int
  _0^xW(t)dt}dx < \infty  , \,\, \int_0^{\infty}|f_2|^2e^{-2\int
  _0^xW(t)dt}dx <  \infty .\] Let $T_s(x,z^2), T_d(x,z)$ and $T_c(x,z)$ denote the transfer matrices of the Schrodinger equation \ref{sc2}, the Dirac system \ref{ca1} and the canonical system \ref{ca2} respectively. Then,\begin{align*}T_s(x,z^2) & = \begin{pmatrix}u(x,z^2) & v(x,z^2)\\u'(x,z^2) & v'(x,z^2) \end{pmatrix}, \\  T_d(x,z) & =  \begin{pmatrix}u(x,z^2) & zv(x,z^2)\\ \frac{u'(x,z^2)-W(x)u(x,z^2)}{z} & v'(x,z)-W(x)v(x,z),\end{pmatrix}  \\  T_c(x,z) & =T_0T_d(x,z).\end{align*} It follows that \begin{align*}  T_d(x,z)=  \begin{pmatrix}z & 0\\-W & 1\end{pmatrix}T_s(x,z^2)\begin{pmatrix}\frac{1}{z} & 0\\0 & 1\end{pmatrix}.\end{align*} So $ T_d(x,z)=T_0^{-1}T_c(x,z)$ and  \begin{align*} T_s(x,z^2)= \frac{1}{z} \begin{pmatrix}1& 0\\W & z\end{pmatrix}T_d(x,z)\begin{pmatrix}z & 0\\0 & 1\end{pmatrix}.\end{align*}

Now we have,

\begin{align*}\displaystyle &\int_0^{\infty} (1,\bar{m}_c(z))T_c^*(x,z) H(x) T_c(x,z)\begin{pmatrix}1 \\ m_c(z) \end{pmatrix}dx  < \infty \\   \Rightarrow  &  \int_0^{\infty}(1,\bar{m}_c(z))T_c^*(x,z)\Big[T_0^{-1}(x)\begin{pmatrix}1 & 0 \\0 & 0 \end{pmatrix}T_0(x)^{-1}+  \\ & \hspace{1in} T_0(x)^{-1}\begin{pmatrix}0 & 0 \\0 & 1 \end{pmatrix}T_0(x)^{-1}\Big]T_c(x,z) \begin{pmatrix}1 \\m_s(z) \end{pmatrix}dx < \infty .\\  \Rightarrow & \int_0^{\infty} (1,\bar{m}_c(z))T_d^*(x,z) T_0(x)T_0(x)^{-1}\begin{pmatrix}1 & 0 \\0 & 0 \end{pmatrix}. \\ & \hspace{1in} T_0(x)^{-1}T_0(x)T_d(x,z)\begin{pmatrix}1 \\m_c(z) \end{pmatrix} dx < \infty. \\  \Rightarrow & \int_0^{\infty} (1,\bar{m}_c)\begin{pmatrix}\frac{1}{z} & 0\\0 & 1\end{pmatrix}T_s^*(x,z^2) \begin{pmatrix}\bar{z} & W\\0 & 1\end{pmatrix}. \\ & \hspace{1in} \begin{pmatrix}0 & 0 \\0 & 1 \end{pmatrix}  \begin{pmatrix}z & 0\\-W & 1\end{pmatrix}T_s(x,z^2)\begin{pmatrix}\frac{1}{z} & 0\\0 & 1\end{pmatrix}\begin{pmatrix}1 \\ m_c(z) \end{pmatrix}dx <\infty .\\   \Rightarrow & \int_0^{\infty} \begin{pmatrix} \frac{1}{z}&\bar{m}_c\end{pmatrix}T_s^*(x,z^2) \begin{pmatrix}0 & 0 \\0 & 1 \end{pmatrix}
T_s(x,z^2)\begin{pmatrix}\frac{1}{z} \\ m_c(z) \end{pmatrix}dx <\infty .\end{align*} Since the Weyl-m function $m_c(z)$ is uniquely defined we must have \[ m_s(z^2)=zm_c(z).\] \end{proof}

Suppose \[ H_+ = \begin{pmatrix} e^{2\int_0^x W(t)dt}& 0\\0& e^{-2\int_0^x W(t)dt} \end{pmatrix},\,\,\, H_- = \begin{pmatrix} e^{-2\int_0^x W(t)dt}& 0\\0& e^{2\int_0^x W(t)dt}\end{pmatrix}\] in the canonical system \ref{ca2} and \ref{ca8} respectively. The following lemma shows the relation between their Weyl-m functions.

\begin{lemma}If $m_{c_+}$  and $m_{c_-} $ are the Weyl-m function corresponding to the canonical system \ref{ca2} and \ref{ca8} respectively then $ m_{c_+} = \frac{-1}{m_{c_-}}$.\end{lemma}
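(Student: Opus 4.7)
The plan is to exhibit a direct bijection between $L^{2}(H_{+},\R_{+})$ solutions of the canonical system \ref{ca2} and $L^{2}(H_{-},\R_{+})$ solutions of the canonical system \ref{ca8}, and then read off the claimed identity by matching initial data. The key algebraic fact driving everything is that since $H_{+}$ is diagonal with determinant $1$, one has $H_{-}=H_{+}^{-1}$, and a direct computation gives the intertwining
\begin{equation*}
JH_{+}=H_{-}J,\qquad\text{equivalently}\qquad JH_{+}J=-H_{-},\qquad J^{*}H_{-}J=H_{+}.
\end{equation*}

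First I would verify the solution correspondence. Suppose $f(x,z)$ solves $Jf'=zH_{+}f$ and set $g(x,z):=Jf(x,z)$. Using $J^{2}=-I$, the equation transforms as $Jg'=J^{2}f'=-f'$, and $f'=-zJH_{+}f$ from the original equation, so $Jg'=zJH_{+}f=zH_{-}Jf=zH_{-}g$, which is exactly \ref{ca8}. Conversely, $f=-Jg$ recovers a solution of \ref{ca2} from any solution of \ref{ca8}, so the map $f\mapsto Jf$ is a bijection on the solution spaces.

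Second, I would check that this bijection preserves the $L^{2}$ class. Using $J^{*}=-J$ and $J^{*}H_{-}J=H_{+}$, one computes $g^{*}H_{-}g=f^{*}J^{*}H_{-}Jf=f^{*}H_{+}f$ pointwise, so integrating against $dx$ shows $g\in L^{2}(H_{-},\R_{+})$ if and only if $f\in L^{2}(H_{+},\R_{+})$.

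Finally, I would match boundary values. By definition, the unique-up-to-scalar $L^{2}(H_{+},\R_{+})$ solution $f_{+}$ satisfies $f_{+}(0,z)=\bigl(\begin{smallmatrix}1\\ m_{c_{+}}(z)\end{smallmatrix}\bigr)$. Then $g_{+}=Jf_{+}$ lies in $L^{2}(H_{-},\R_{+})$ and has initial value $g_{+}(0,z)=\bigl(\begin{smallmatrix}-m_{c_{+}}(z)\\ 1\end{smallmatrix}\bigr)$. Since the $L^{2}(H_{-},\R_{+})$ solution is unique up to a scalar, $g_{+}$ must be proportional to the solution with initial value $\bigl(\begin{smallmatrix}1\\ m_{c_{-}}(z)\end{smallmatrix}\bigr)$; comparing the two components of the proportionality gives $-m_{c_{+}}(z)\cdot m_{c_{-}}(z)=1$, i.e.\ $m_{c_{+}}=-1/m_{c_{-}}$. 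The only place where a small subtlety can enter is confirming that the intertwining $JH_{+}=H_{-}J$ genuinely transports the boundary trace defining $m_{c_{\pm}}$; this is immediate here because both canonical systems use the same standard initial frame $T_{c}(0,z)=I$ and the Dirichlet-style Weyl function is determined purely by the $L^{2}$ condition at infinity.
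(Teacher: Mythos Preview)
Your argument is correct and follows essentially the same route as the paper: both proofs rest on the intertwining $-JH_{+}J=H_{-}$ (equivalently $JH_{+}=H_{-}J$), use it to show that $f\mapsto Jf$ bijects solutions of the two systems while preserving the weighted $L^{2}$ class, and then read off $m_{c_{+}}=-1/m_{c_{-}}$ from uniqueness of the Weyl solution. The only cosmetic difference is that the paper packages the $L^{2}$ step through the transfer matrices $T_{c_{\pm}}$ and the relation $T_{c_{-}}=-JT_{c_{+}}J$, whereas you apply $J$ directly to the Weyl solution and compare initial values.
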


\begin{proof} Notice that $ -JH_+ J  = H_-.$ Here  $ u$ is a solution of $ Ju'=zH_+ u $ if and only if  $ Ju$ is a solution of $ Ju'=zH_- u.$ Let $ T_{c_+}(x) $ and $T_{c_-}(x)$ be the transfer matrices  and  $ m_{c_+}$ and $m_{c_-}$ are the Weyl-m functions of the canonical systems with the Hamiltonians $ H_+$ and $H_-$ respectively. Then $ T_{c_-}(x) = -JT_{c_+}(x)J$  and  \begin{align*} &  \int_0^{\infty} (1, \bar{m}_{c_-})T^*_{c_-}(x)H_-T_{c_-}(x) \begin{pmatrix}1 \\m_{c_-} \end{pmatrix}dx < \infty  \\ \Rightarrow & \int_0^{\infty} (1, \bar{m}_{c_-})(-JT_{c_+}(x)J)^* H_- ( -JT_{c_+}(x)J ) \begin{pmatrix}1 \\m_{c_-} \end{pmatrix}dx < \infty   \\ \Rightarrow & \int_0^{\infty} \begin{pmatrix} 1, & \frac{-1}{ \bar{m}_{c_-}}\end{pmatrix} T^*_{c_+}(x)H_+ T_{c_+}(x) \begin{pmatrix} 1 \\ \frac{-1}{ \bar{m}_{c_-}} \end{pmatrix}dx < \infty  .\end{align*}  Since $ m_{c_+}$ is the unique coefficient such that \begin{align*}\int_0^{\infty} (1, \bar{m}_{c_+})T^*_{c_+}(x)H_+T_{c_+}(x) \begin{pmatrix} 1 \\ m_{c_+} \end{pmatrix}dx < \infty   \end{align*}  we have $ m_{c_+} = \frac{-1}{m_{c_-}}. $ \end{proof}

\begin{theorem}Let $\omega(V)$ and $\omega(H)$ are the $\omega$-limit set corresponding to a Schrodinger equation \ref{sc} and its canonical system \ref{sac} respectively. Then if $W \in \omega(V)$  then $ K \in \omega(H)$ where $K$ is the Hamiltonian corresponding to a canonical system  of the Schrodinger equation with $W$ as potential. Conversely, if $K \in \omega(H)$ then $K$ is a Hamiltonian for a canonical system of a Schrodinger equation for some potential $W\in \omega(V).$ \end{theorem}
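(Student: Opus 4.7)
The plan is to transfer information between $V$ and the associated Hamiltonian $H$ through their common Weyl $m$-function, using Lemma \ref{rm1} (which gives $m_s=m_c$) together with the homeomorphism $\mu_+\leftrightarrow M_+$ from Lemma \ref{Lem6.2}. The basic compatibility is translation equivariance: the Schrodinger $m$-function of the shifted potential $V(\,\cdot+a)$ at $0$ coincides with the $m$-function of $V$ based at $a$, and the shift $S_a$ on Hamiltonians does the analogous thing for canonical systems.

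For the forward direction, fix $W\in\omega(V)$ and pick $x_n\to\infty$ with $V(\,\cdot+x_n)\to W$ in the potential topology. A continuous-dependence argument for solutions of $-y''+Vy=zy$ (in the spirit of the proof of Lemma \ref{Lem6.2}) shows that the Schrodinger $m$-functions depend continuously on $V$, so $m_s(x_n,V,z)\to m_s(0,W,z)$ locally uniformly on $\C^+$. By Lemma \ref{rm1} this reads $m_c(x_n,V,z)\to m_K(z)$, where $K=H_W$ is the canonical-system Hamiltonian built from $W$. By compactness of $(\mathcal V_{2\times 2},d)$, the sequence $S_{x_n}H\in\mathcal V_{2\times 2}$ has accumulation points, and Lemma \ref{Lem6.2} applied to any such accumulation point $L$ gives $m_L=m_K$, hence $L=K$. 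Therefore $S_{x_n}H\to K$ and $K\in\omega(H)$.

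For the converse, take $K\in\omega(H)$ with $S_{y_n}H\to K$ in $\mathcal V_{2\times 2}$. Lemma \ref{Lem6.2} gives $m_c(y_n,V,z)\to m_K(z)$, and by Lemma \ref{rm1} this is the same as $m_s(y_n,V,z)\to m_K(z)$. The shifted potentials $V(\,\cdot+y_n)$ are precompact in the topology underlying $\omega(V)$, so after a further subsequence $V(\,\cdot+y_n)\to W\in\omega(V)$; continuous dependence forces $m_s(0,W,z)=m_K(z)$, and uniqueness in the Schrodinger inverse spectral problem identifies $K$ with $H_W$.

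The main obstacle is the bookkeeping around normalization. The canonical-system Hamiltonian $H_V=\bigl(\begin{smallmatrix}u_0^2 & u_0v_0\\ u_0v_0 & v_0^2\end{smallmatrix}\bigr)$ does not satisfy $\operatorname{tr}H_V\equiv 1$, and shifting by $x_n$ produces $S_{x_n}H_V$ rather than the Hamiltonian built from the shifted potential $V(\,\cdot+x_n)$ with standard boundary conditions at $0$; these two Hamiltonians differ by a constant $\SL(2,\R)$ cocycle $T_V(x_n)$ that need not converge. The resolution is to pass to trace-normalized coordinates via the change of variable \ref{cv}, under which this cocycle becomes a M\"obius equivalence that is absorbed when one passes to the $m$-function. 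The delicate step is verifying that this equivalence descends to convergence in the metric $d$ on $\mathcal V_{2\times 2}$, so that Lemmas \ref{rm1} and \ref{Lem6.2} may be invoked in the normalized setting without loss of information.
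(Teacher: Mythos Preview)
Your proposal is correct and follows essentially the same route as the paper: both arguments transfer convergence between potentials and Hamiltonians through the common $m$-function, invoking Lemma~\ref{rm1} to identify $m_s=m_c$ and Lemma~\ref{Lem6.2} to pass between convergence of Hamiltonians and convergence of their $m$-functions, with the trace normalization \eqref{cv} used to place everything in the compact space $\mathcal V_{2\times 2}$. Your discussion of the $\SL(2,\R)$ cocycle relating $S_{x_n}H_V$ to the Hamiltonian built from $V(\cdot+x_n)$ with standard initial data is more explicit than the paper's (which simply asserts $H_n=H(x+x_n)$ and then passes to $\widetilde H_n$), and your compactness-plus-uniqueness maneuver replaces the paper's direct use of the homeomorphism, but these are stylistic variations on the same argument rather than a different approach.
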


\begin{proof} Suppose $W \in \omega(V)$ then by definition of $\omega-$ limit set there exists a sequence $x_n\rightarrow \infty$ such that $V(x+x_n)\rightarrow W. $ Then the corresponding Weyl m-functions also converge, ie $ m_s^{V_n}(z)\rightarrow m_s^{W}(z).$ Let $H_n$ be the Hamiltonian of the canonical system obtained from the Schrodinger equation with the potential $V(x+x_n)$ then $H_n= H(x+x_n)$ then by Lemma \ref{rm1} $  m_s^{V_n}(z)=  m_c^{H_n}(z).$ and $ m_s^{W}(z)= m_c^{H}(z).$ Now apply the change of variable by \ref{cv} and obtain $ \widetilde{H}_n$ and the corresponding m-function is $m_c^{\widetilde{H}_n}(z).$ After the change of variable the corresponding Weyl m-functions are the same up to the change of the point of boundary condition. So the convergence of $ m_s^{V_n}(z)=m_c^{H_n}(z)$ implies the convergence of $ m_c^{\widetilde{H}_n}(z). $ It follows that $ m_c^{\widetilde{H}_n}(z) \rightarrow  m_s^{W}(z) . $ But by Lemma \ref{Lem6.2} $m_s^{W}(z)= m_c^{\widetilde {H}}(z)$ where $m_c^{\widetilde H}(z)$ is the Weyl m-function for some Hamiltonian $\widetilde {H}.$ It follows that $ m_c^{\widetilde{H}_n}(z) \rightarrow m_c^{\widetilde {H}}(z).$  Again by Lemma \ref{Lem6.2} we get $\widetilde{H}_n \rightarrow \widetilde {H}  $ using the change of variable on the canonical system with Hamiltonian $\widetilde {H}$ we obtain a Hamiltonian $ K $ such that $m_c^{\widetilde {H}}(z)= m_c^{K}(z)$ up to the change of point of boundary condition. It follows that $H_n \rightarrow K$ and so $K \in \omega(H) .$ Converse is similar.
  \end{proof}

\providecommand{\bysame}{\leavevmode\hbox to3em{\hrulefill}\thinspace}
\providecommand{\MR}{\relax\ifhmode\unskip\space\fi MR }
\providecommand{\MRhref}[2]{%
  \href{http://www.ams.org/mathscinet-getitem?mr=#1}{#2}
}
\providecommand{\href}[2]{#2}

\vspace{0.5in}Department of Mathematics,\\ University of Oklahoma, Norman, OK, 73019 \\ E-mail Address: kacharya@math.ou.edu

\end{document}